\newcommand{\marginlabel}[1]%
 {\mbox{}\marginpar{\toggedleft\hspace{0pt}\bfseries\sf#1}}
\numberwithin{equation}{section}
\newtheorem{thm}{Theorem}[section]
\newtheorem{lem}[thm]{Lemma}
\newtheorem{prop}[thm]{Proposition}
\newtheorem{thmInt}{Theorem}[section]
\theoremstyle{definition}
\newtheorem{defn}[thm]{Definition}
\newtheorem{rem}[thm]{Remark}
\newcommand{\uQ}{{\widetilde Q}}
\newcommand{\uK}{{\widetilde K}}
\newcommand{\uY}{{\widetilde Y}}
\newcommand{\ul}{{\widetilde l}}
\newcommand{\uPP}{{\widetilde \PP}}
\newcommand{\A}{\mathcal{A}}
\newcommand{\B}{\mathcal{B}}
\newcommand{\CC}{\mathbb{C}}
\newcommand{\E}{\mathcal{E}}
\renewcommand{\H}{\mathcal{H}}
\newcommand{\I}{\mathcal{I}}
\newcommand{\M}{\mathcal{M}}
\renewcommand{\O}{\mathcal{O}}
\newcommand{\PP}{\mathbb{P}}
\newcommand{\QQ}{\mathbb{Q}}
\newcommand{\T}{\mathcal{T}}
\newcommand{\ZZ}{\mathbb{Z}}
\renewcommand{\geq}{\geqslant}
\renewcommand{\leq}{\leqslant}
\newcommand{\res}[2]{\left.#1\right|_{#2}} 
\newcommand{\cat}[1]{\begin{bf}#1\end{bf}}
\newcommand{\lra}{\longrightarrow}
\newcommand{\set}[1]{\left\{#1\right\}}
\DeclareMathOperator{\coh}{\cat{Coh}}
\DeclareMathOperator{\Hom}{Hom}
\DeclareMathOperator{\Ext}{Ext}
\DeclareMathOperator{\id}{id}
\DeclareMathOperator{\rk}{rk}
\DeclareMathOperator{\RHom}{RHom}
\DeclareMathOperator{\Pic}{Pic}
\DeclareMathOperator{\ch}{ch}
\DeclareMathOperator{\codim}{codim}
\newcommand{\Db}{{\rm D}^{\rm b}}
\newcommand{\Mat}{\mathrm{End}}
\newenvironment{enumerate*}{\begin{enumerate}[topsep=4pt, partopsep=4pt, itemsep=0pt]}{\end{enumerate}}
\newenvironment{itemize*}{\begin{itemize}[topsep=4pt, partopsep=4pt, itemsep=0pt]}{\end{itemize}}
\newenvironment{description*}{\begin{description}[topsep=4pt, partopsep=4pt, itemsep=0pt]}{\end{description}}
\begin{document}

\title[ACM bundles on cubic fourfolds containing a plane]{Arithmetically Cohen-Macaulay bundles on cubic fourfolds containing a plane}

\author[M.~Lahoz, E.~Macr\`i, and P.~Stellari]{Mart\'{\i} Lahoz, Emanuele Macr\`i, and Paolo Stellari}

\address{M.L.: Institut de Math\'{e}matiques de Jussieu -- Paris Rive Gauche (UMR 7586), Universit\'{e} Paris Diderot / Universit\'{e} Pierre et Marie Curie, B\^{a}timent Sophie Germain, Case 7012, 75205 Paris Cedex 13, France}
\email{marti.lahoz@imj-prg.fr}
\urladdr{\url{http://webusers.imj-prg.fr/~marti.lahoz/}}

\address{E.M.: Department of Mathematics, The Ohio State University, 231 W 18th Avenue, Columbus, OH 43210, USA}
\curraddr{Department of Mathematics, Northeastern University, 360 Huntington Avenue, Boston, MA 02115, USA}
\email{e.macri@neu.edu}
\urladdr{\url{http://nuweb15.neu.edu/emacri/}}

\address{P.S.: Dipartimento di Matematica ``F.~Enriques'', Universit{\`a} degli Studi di Milano, Via Cesare Saldini 50, 20133 Milano, Italy}
\email{paolo.stellari@unimi.it}
\urladdr{\url{http://users.unimi.it/stellari}}

\thanks{M.~L.~ is partially supported by SFB/TR 45, Fondation Math\'ematique Jacques Hadamard (FMJH) and MTM2012-38122-C03-02.
E.~M.~ is partially supported by the NSF grants DMS-1001482/DMS-1160466 and DMS-1302730/DMS-1523496, the Hausdorff Center for Mathematics, Universit\"at Bonn, and SFB/TR 45. P.~S.~ is partially supported by the grants FIRB 2012 ``Moduli Spaces and Their Applications'' and
the national research project ``Geometria delle Variet\`a Proiettive'' (PRIN 2010-11).}

\keywords{Arithmetically Cohen-Macaulay vector bundles, cubic fourfolds}

\subjclass[2010]{18E30, 14E05}

\begin{abstract}
	We study ACM bundles on cubic fourfolds containing a plane exploiting the geometry of the associated quadric fibration and Kuznetsov's treatment of their bounded derived categories of coherent sheaves.
	More precisely, we recover the K3 surface naturally associated to the fourfold as a moduli space of Gieseker stable ACM bundles of rank four.
\end{abstract}

\maketitle


\section*{Introduction}

This paper is a follow-up of \cite{LMS3} which deals with some geometric properties of special ACM bundles on smooth cubic threefolds (i.e., smooth hypersurfaces of degree $3$ in $\PP^4$). The idea we are pursuing in the two papers is that, for a cubic hypersurface $Y$, one should consider semiorthogonal decompositions of its derived category $\Db(Y)$.
Up to some (a priori non-canonical) choice, one gets a non-trivial triangulated subcategory $\mathbf{T}_Y\subset\Db(Y)$.
According to an intuition of Kuznetsov, $\mathbf{T}_Y$ should encode the birational information of the cubic.

More specifically, if $Y$ is a cubic fourfold (i.e., a smooth hypersurface of degree $3$ in $\PP^5$), then in \cite{Kuz:4fold} it is conjectured that $Y$ is rational if and only if the category $\mathbf{T}_Y$ is equivalent to the derived category of a K3 surface. A relation between this conjecture and the classical Hodge theoretical approach to rationality appears in \cite{AT}.

\smallskip

If we assume further that the cubic fourfold $Y$ contains a plane $P$, then the projection from $P$ onto a skew $\PP^2$ in $\PP^5$ yields a quadric fibration $\pi$ over $\PP^2$. As an instance of Kuznetsov's semiorthogonal decomposition of the derived category of a quadric fibration (see \cite{Kuz:Quadric} and Section \ref{subsec:4foldsgeom}), there exists an exact equivalence $\Xi$ between $\mathbf{T}_Y$ and the bounded derived category of sheaves on $\PP^2$ with the action of a sheaf of Clifford algebras $\B_0$ (determined by the structure of quadric fibration on the cubic).

The category $\Db(\PP^2,\B_0)$ can be described more geometrically.
Indeed, the singular quadrics of the fibration $\pi$ lie over a sextic plane curve which we denote by $C$.
Generically, the sextic is smooth.
In such a case, we let $S$ be the smooth projective K3 surface obtained as double cover of $\PP^2$ ramified along $C$.
We call $S$ the associated K3 surface to the pair $(Y,P)$ or, equivalently, to the quadric fibration $\pi$.
Otherwise, the K3 surface $S$ is singular over the singular points of $C$.
Then, by \cite{Kuz:4fold}, we have an equivalence $\Db(\PP^2,\B_0)\cong\Db(S,\A_0)$, where $\A_0$ is a sheaf of algebras over $S$ which, restricted to the smooth locus of $S$, is actually a sheaf of Azumaya algebras.

\subsection*{The result}

Consider now a very ample line bundle $\O_X(H)$ on a smooth projective variety $X$.
Recall that a vector bundle $F$ on a $X$ is \emph{Arithmetically Cohen--Macaulay} if $\dim H^i(X,F(jH))=0$, for all $i=1,\ldots,n-1$ and all $j\in\ZZ$.
The presence of families of arbitrary dimensions of such bundles determines the \emph{representation type} of $X$ which should encode the complexity of the geometry of $X$ (see, for example, \cite{CH1}).
A way to make this precise is by observing that ACM bundles correspond to {\em Maximal Cohen-Macaulay} (MCM) modules over the graded ring associated to the projectively embedded variety (see, e.g., \cite{Yo}).

In \cite{LMS3}, it was observed that, given a stable ACM bundle $F$ on a smooth projective cubic hypersurface $Y$, a certain twist of $F$ by the very ample line bundle $\O_Y(H)$ belongs to $\mathbf{T}_Y$ (this is Lemma \ref{lem:ACM}).
If $Y$ has dimension $4$ and contains a plane, one can use the functor $\Xi$ to study basic properties of ACM bundles on $Y$ or to construct examples or families of such bundles.
This reduces the problem to consider certain complexes of $\B_0$-modules on $\PP^2$.
From the homological point of view, $\Db(\PP^2,\B_0)$ has dimension $2$ and thus one may reasonably hope that such a dimension reduction can clarify or simplify the picture and the computations.

\smallskip

The present paper is actually an incarnation of this idea.
More precisely, the K3 surface $S$ associated to $(Y,P)$ plays an important role in the study of moduli spaces of ACM bundles on cubic fourfolds.
Indeed, we can prove the following.

\begin{thmInt}\label{thm:main4folds}
	Let $Y$ be a cubic fourfold in $\PP^5$ containing a plane $P$.
	Then the smooth locus $S_{\mathrm{reg}}$ of the K3 surface $S$ associated to $(Y,P)$ is isomorphic to an open subset of an irreducible component of the moduli space of Gieseker stable ACM bundles over $Y$ with class $\left(4,-2H,-P,l,\tfrac{1}{4}\right)$, where $l$ is the class of a line in $Y$.
\end{thmInt}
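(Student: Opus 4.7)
The plan is to transport the tautological family of simple $\A_0$-modules on $S$ into $\Db(Y)$ via the chain of equivalences
\[
\Db(S,\A_0) \cong \Db(\PP^2,\B_0) \cong \mathbf{T}_Y \hookrightarrow \Db(Y),
\]
(the second equivalence being $\Xi^{-1}$), and then to show that, up to a twist by a power of $\O_Y(H)$, the resulting objects are Gieseker-stable ACM vector bundles with the prescribed Chern character. For each $s\in S$, the stalk $\A_0|_s$ is isomorphic to $M_2(k)$ and so admits a unique simple module $V_s$ of dimension two over $k$; I would denote by $T_s$ the corresponding simple object in $\coh(S,\A_0)$ supported at $s$. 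By Schur $\Hom(T_s,T_s)=k$; by disjointness of supports $\Hom(T_s,T_t)=0$ for $s\neq t$; and the local-to-global $\Ext$ spectral sequence would give $\dim\Ext^1(T_s,T_s)=2$, exactly as for skyscrapers on a surface.

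Let $\widetilde F_s\in\mathbf{T}_Y\subset\Db(Y)$ denote the image of $T_s$ under the composite. The first task is to check that, up to shift, $\widetilde F_s$ is a vector bundle of rank four. I would do this by unpacking Kuznetsov's Fourier-Mukai kernel for $\Xi$: with $f:S\to\PP^2$ the double cover, for $s$ lying over the open locus $\PP^2\setminus C$ the quadric surface $Q_{f(s)}:=\pi^{-1}(f(s))\subset\tilde Y$ is smooth, its two rulings are labelled by the two preimages of $f(s)$ in $S$, and $\widetilde F_s$ should be the pushforward to $Y$ of the spinor bundle on $Q_{f(s)}$ associated to the ruling selected by $s$. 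The family of spinor bundles has to be shown to extend flatly across the divisor where $Q_{f(s)}$ degenerates into a cone (i.e., over $C$).

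The next step is the Chern-character calculation. Applying Grothendieck-Riemann-Roch to the inclusion of the quadric fibration and the blow-down $\tilde Y\to Y$, together with the description above, the expected outcome is $\ch(E_s)=(4,-2H,-P,l,\tfrac14)$, where $E_s:=\widetilde F_s\otimes\O_Y(jH)$ for the shift $j$ prescribed by Lemma \ref{lem:ACM}. That $E_s$ is ACM then follows from this characterization: the vanishings $H^i(Y,E_s(jH))=0$ for $1\leq i\leq 3$ and all $j\in\ZZ$ correspond, under $\Xi$, to $\Ext^i_{\A_0}(W_j,T_s)=0$ for the $\B_0$-modules $W_j$ representing the line bundles $\O_Y(jH)$ in $\mathbf{T}_Y^{\perp}$ (or their twists), and these vanish automatically in the required range because $T_s$ is a skyscraper concentrated at a single point.

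For Gieseker stability and the moduli-theoretic conclusion, the simplicity $\Hom(E_s,E_s)=\Hom_{\A_0}(T_s,T_s)=k$ together with the numerical class (whose discriminant is small enough that slope-semistability plus simplicity forces Gieseker-stability) would give stability of $E_s$. Then $s\mapsto E_s$ defines an injective morphism from $S$ into the moduli space of Gieseker-stable ACM bundles with the given class, and since $\Ext^1(E_s,E_s)=\Ext^1_{\A_0}(T_s,T_s)$ is two-dimensional the image is open in a two-dimensional component; as $S$ is an irreducible projective surface, its image coincides with the closure of that component. The hard part will be verifying that $\widetilde F_s$ is a sheaf concentrated in a single cohomological degree, together with the precise Chern-character computation: both require a careful handling of the Fourier-Mukai kernel realizing $\Xi$ and a check that the spinor-bundle description extends across the discriminant sextic $C$.
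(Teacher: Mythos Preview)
Your overall strategy matches the paper's, but two of your proposed shortcuts do not go through, and a third step is misconceived.

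First, the description of $\widetilde F_s$ as ``the pushforward to $Y$ of the spinor bundle on $Q_{f(s)}$'' cannot be right: that would be a torsion sheaf supported on a surface, not a rank-four bundle. What the paper shows is that $\Phi(L_x)\cong\I_{\tilde l_x,\tilde Q_{f(x)}}$ is the ideal sheaf of a line in the quadric fibre (indeed a spinor-type object, but torsion on $\tilde Y$); the rank-four bundle appears only after the two mutations in $\Xi_4^{-1}=\sigma_*\circ\cat{L}_{\O_{\tilde Y}(h-H)}\circ\cat{R}_{\O_{\tilde Y}(-h)}\circ\Phi$ are computed explicitly. The output is an extension $0\to\O_Y(-H)^{\oplus2}\to M_x\to K_x\to 0$ with $K_x$ a rank-two sheaf admitting further presentations by ideal sheaves (Lemma~\ref{lem:laK}). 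This concrete structure is what makes the Chern character, the ACM property, and stability accessible.

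Second, your ACM argument fails: the vanishings $H^i(Y,M_x(jH))=\Hom^i_Y(\O_Y(-jH),M_x)$ involve $\O_Y(-jH)\notin\cat{T}_Y$ for the relevant $j$, so they do \emph{not} translate under $\Xi_4$ to $\Ext^i_{\A_0}(W_j,T_s)$ for any object $W_j$ to which the skyscraper heuristic applies. The paper instead uses Lemma~\ref{lem:viceversa} to reduce to finitely many vanishings and then, for the hardest one $H^3(Y,M_x(-3H))=0$, pushes $\O_Y(3H)$ through the adjoints of the mutations and lands on a concrete cohomology computation on the quadric, namely $H^1(Q,\T_{\PP^3}(-3H)\otimes\I_{l,Q})=0$ (Lemma~\ref{lem:lastcohom}); this is genuine work. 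Third, simplicity together with ``small discriminant'' does not force Gieseker stability, and you have not established slope-semistability either; the paper proves stability by a direct case analysis on destabilising reflexive subsheaves of ranks $1,2,3$, using the explicit extension structure of $M_x$.
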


The theorem can be deduced by a quite long computation carried out all along Sections \ref{sec:family1} and \ref{sec:family2}.
As far as we know, this is the first example of a $2$-dimensional family of stable ACM bundles of rank $4$ on a cubic fourfold.
Moreover, we observe that the smooth locus of any moduli space of slope-stable ACM vector bundles on $Y$ carries a symplectic form
(see Remark \ref{rem:sympl}).

A way to rephrase Theorem \ref{thm:main4folds} is that the embedding of $\cat{T}_Y$ into $\Db(Y)$ can be realized as a fully faithful Fourier--Mukai functor whose kernel is a shift of a sheaf: namely the universal family in the moduli problem in Theorem \ref{thm:main4folds}.

\subsection*{Related works}

As we pointed out in \cite{LMS3}, the idea of using semiorthogonal decompositions to study ACM bundles by reducing dimension is an application of the circle of ideas in \cite{KuzNew}.
This idea was previously exploited in \cite{MS} to describe the Fano variety of lines of cubic fourfolds containing a plane.
In \cite{BMMS}, the Fano variety of lines on a cubic threefold is reconstructed as a moduli space of Bridgeland stable objects.
Several ideas from the last two papers enter the picture described by the present paper and \cite{LMS3}.

Several papers study stable ACM bundles on threefolds and surfaces (see  \cite{LMS3} for a non-complete list of references).
In \cite{LMS3}, we provide a generalization of one of the main results in \cite{CH}.
In particular, we show that the moduli spaces of stable Ulrich bundles on any smooth cubic threefold is non-empty.
Roughly speaking, Ulrich bundles are ACM bundles with prescribed constraints on the degree of the generators (see \cite{CH} for a precise definition).

We should point out that the use of derived categories and Bridgeland stability is actually intrinsic in the strategy of the proofs of the main results in \cite{LMS3}.
On the other hand, the only key point where derived categories appear in the present paper is the existence of the Fourier--Mukai equivalence $\Xi$ above.
This serves as a guideline to construct the $2$-dimensional family of stable ACM bundles in Theorem \ref{thm:main4folds}.

\subsection*{Plan of the paper}

The paper is organized as follows.
Section \ref{sec:prelim} collects basic facts about semiorthogonal decompositions and general results about ACM bundles on cubic hypersurfaces.
In Section \ref{subsec:4foldsgeom} we review Kuznetsov's work on quadric fibrations and we then focus on the case of cubic fourfolds containing a plane.

Sections \ref{sec:family1} and \ref{sec:family2} are devoted to the proof of Theorem \ref{thm:main4folds}. In particular, in Section \ref{sec:family1}, we show how to associate a (shift of a) coherent sheaf on $Y$ to any point in the regular locus of the K3 surface $S$ associated to $Y$. We then prove in Section \ref{sec:family2} that such sheaves are Gieseker stable and of ACM type.

\subsection*{Notation}

Throughout this paper we work over the complex numbers.
For a smooth projective variety $X$, we denote by $\Db(X)$ the bounded derived category of coherent sheaves on $X$.
We refer to \cite{huy} for basics on derived categories.
If $X$ is not smooth, we denote by $X_{\mathrm{reg}}$ the regular part of $X$.
This paper assumes some familiarity with the notion of slope and Gieseker stability, of Harder--Narasimhan (HN) and Jordan--H\"{o}lder (JH) factors of a (semistable) vector bundle.
For this, we refer to \cite{HL} which is also our reference for the standard construction of moduli spaces of stable sheaves. To shorten the notation Gieseker stability will be simply called stability.

\section{Preliminaries}\label{sec:prelim}

This section contains some preliminary material about semiorthogonal decompositions and their use to study quadric fibrations. We then specialize to the case of cubic fourfolds containing a plane with particular attention to the associated K3 surface.

\subsection{Semiorthogonal decompositions}\label{subsec:Semiorth}
Let $X$ be a a smooth projective variety and let $\Db(X)$ be its bounded derived category of coherent sheaves. A \emph{semiorthogonal} decomposition of $\Db(X)$ is a sequence of full triangulated subcategories $\cat{T}_1,\ldots,\cat{T}_m\subseteq\Db(X)$ such that $\Hom_{\Db(X)}(\cat{T}_i,\cat{T}_j)=0$, for $i>j$ and, for all $G\in\Db(X)$, there exists a chain of morphisms in $\Db(X)$
	\[
	0=G_m\to G_{m-1}\to\ldots\to G_1\to G_0=G
	\]
	with $\mathrm{cone}(G_i\to G_{i-1})\in\cat{T}_i$, for all $i=1,\ldots,m$.
	We will denote such a decomposition by $\Db(X)=\langle\cat{T}_1,\ldots,\cat{T}_m\rangle$.

Moreover, an object $F\in\Db(X)$ is \emph{exceptional} if $\Hom_{\Db(X)}(F,F)\cong\CC$ and $\Hom_{\Db(X)}^p(F,F)=0$, for all $p\neq0$.
	A collection $\{F_1,\ldots,F_m\}$ of objects in $\Db(X)$ is called an \emph{exceptional collection} if $F_i$ is an exceptional object, for all $i$, and $\Hom_{\Db(X)}^p(F_i,F_j)=0$, for all $p$ and all $i>j$.

\begin{rem}\label{rmk:exceptional}
	An exceptional collection $\{F_1,\ldots,F_m\}$ in $\Db(X)$ provides a semiorthogonal decomposition
	\[
	\Db(X)=\langle\cat{T},F_1,\ldots,F_m\rangle,
	\]
	where, by abuse of notation, we denoted by $F_i$ the triangulated subcategory generated by $F_i$ (equivalent to the bounded derived category of finite dimensional vector spaces).
	Moreover
	\[
	\cat{T}:=\langle F_1,\ldots,F_m\rangle^\perp=\left\{G\in\Db(X)\,:\,\Hom^p(F_i,G)=0,\text{ for all }p\text{ and }i\right\}.
	\]
	Similarly, one can define ${}^\perp\langle F_1,\ldots,F_m\rangle=\left\{G\in\cat{T}\,:\,\Hom^p(G,F_i)=0,\text{ for all }p\text{ and }i\right\}$.
\end{rem}

For $F\in \Db(X)$ an exceptional object, we consider the two functors, respectively \emph{left and right mutation}, $\cat{L}_F,\cat{R}_F:\Db(X)\to\Db(X)$ defined by
\begin{equation}\label{eqn:LRmutation}
	\begin{split}
	\cat{L}_F(G)&:=\mathrm{cone}\left(\mathrm{ev}:\mathrm{RHom}(F,G)\otimes F\to G\right)\\
	\cat{R}_F(G)&:=\mathrm{cone}\left(\mathrm{ev}^\vee:G\to\mathrm{RHom}(G,F)^\vee\otimes F\right)[-1],
	\end{split}
\end{equation}
where $\mathrm{RHom}(-,-):=\oplus_{p}\Hom_{\Db(X)}^p(-,-)[-p]$.
More intrinsically, let $\iota_{{}^\perp F}$ and $\iota_{F^\perp}$ be the full embeddings of ${}^\perp F$ and $F^\perp$ into $\Db(X)$.
Denote by $\iota^*_{{}^\perp F}$ and $\iota^!_{{}^\perp F}$ the left and right adjoints of $\iota_{{}^\perp F}$ and by $\iota^*_{{F}^\perp}$ and $\iota^!_{{F}^\perp}$ the left and right adjoints of $\iota_{{F}^\perp}$.
Then $\cat{L}_F=\iota_{F^\perp}\circ\iota^*_{F^\perp}$, while $\cat{R}_F=\iota_{{}^\perp F}\circ\iota^!_{{}^\perp F}$ (see, e.g., \cite[Sect.~2]{KuzHPD}).

The main property of mutations is that, given a semiorthogonal decomposition of $\Db(X)$
	\[
	\langle\cat{T}_1,\ldots,\cat{T}_k,F,\cat{T}_{k+1},\ldots,\cat{T}_n\rangle,
	\]
we can produce two new semiorthogonal decompositions
	\[
	\langle\cat{T}_1,\ldots,\cat{T}_k,\cat{L}_F(\cat{T}_{k+1}),F,\cat{T}_{k+2},\ldots,\cat{T}_n\rangle
	\]
and regarding ACM bundles on threefolds and surfaces.
	\[
	\langle\cat{T}_1,\ldots,\cat{T}_{k-1},F,\cat{R}_F(\cat{T}_k),\cat{T}_{k+1},\ldots,\cat{T}_n\rangle.
	\]

\subsection{ACM bundles on cubics}\label{subsec:ACM}

Let us briefly summarize some general results form \cite{LMS3}. Let $Y$ be a \emph{smooth cubic $n$-fold}, namely a smooth projective hypersurface of degree $3$ in $\PP^{n+1}$.
We set $\O_Y(H):=\O_{\PP^{n+1}}(H)|_{Y}$.
According to Remark \ref{rmk:exceptional}, as observed by Kuznetsov, the derived category $\Db(Y)$ of coherent sheaves on $Y$ has a semiorthogonal decomposition
\begin{equation}\label{eqn:semiorthgen}
		\Db(Y)=\langle\cat{T}_Y,\O_Y,\O_Y(H),\ldots,\O_Y((n-2)H)\rangle,
\end{equation}
where, by definition,
\begin{equation*}
	\begin{split}
	\cat{T}_Y&:=\langle\O_Y,\ldots,\O_Y(n-2)\rangle^\perp\\
	&=\left\{G\in\Db(Y):\Hom^p_{\Db(Y)}(\O_Y(iH),G)=0,\text{ for all }p\text{ and }i=0,\ldots,n-2\right\}.
	\end{split}
\end{equation*}

To begin with, consider the following. regarding ACM bundles on threefolds and surfaces.

\begin{defn}\label{def:ACMbalanced}
	(i) A vector bundle $F$ on a smooth projective variety $X$ of dimension $n$ is \emph{arithmetically Cohen-Macaulay} (ACM) if $\dim H^i(X,F(jH))=0$, for all $i=1,\ldots,n-1$ and all $j\in\ZZ$.
	
	(ii) An ACM bundle $F$ is called \emph{balanced} if $\mu(F)\in[-1,0)$.
\end{defn}

The following results will be relevant in the rest of the paper and provide even more evidence of the geometric meaning of the admissible subcategory $\mathbf{T}_Y$.

\begin{lem}\label{lem:ACM}{\bf(\cite{LMS3}, Lemma 1.6.)}
	Let $Y\subset \PP^{n+1}$ be a smooth cubic $n$-fold.
	Let $F$ be a balanced $\mu$-stable ACM bundle with $\rk(F)>1$.
	Then $F\in \mathbf{T}_Y$.
\end{lem}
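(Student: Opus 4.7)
The plan is to verify directly, from the definition of $\mathbf{T}_Y$, that
\[
\Hom^p_{\Db(Y)}(\O_Y(iH), F) \cong H^p(Y, F(-iH)) = 0
\]
for every $i \in \{0, 1, \ldots, n-2\}$ and every $p \in \ZZ$. The ACM hypothesis immediately gives this vanishing in the intermediate degrees $p = 1, \ldots, n-1$ for every twist $i$, so the real content will be the two boundary cases $p = 0$ and $p = n$, and these will use only $\mu$-stability together with the balanced bound $\mu(F) \in [-1, 0)$, not the ACM property.

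For $p = 0$, I would normalize slopes so that $\mu(\O_Y(H)) = 1$ and observe that $\mu(F(-iH)) = \mu(F) - i < 0$ for every $i \geq 0$. Any nonzero map $\O_Y \to F(-iH)$ into the locally free (hence torsion-free) sheaf $F(-iH)$ has image isomorphic to $\O_Y$, producing a proper subsheaf of slope $0$; since $0 > \mu(F(-iH))$, this already contradicts strict $\mu$-stability.

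For $p = n$, I would combine Serre duality with the adjunction formula $\omega_Y \cong \O_Y((1-n)H)$ to rewrite
\[
H^n(Y, F(-iH)) \cong H^0\!\left(Y, F^\vee((i+1-n)H)\right)^\vee.
\]
Since $F$ is locally free, $F^\vee$ is also $\mu$-stable, and the slope of $F^\vee((i+1-n)H)$ equals $-\mu(F) + (i+1-n)$. For $i \in \{0, \ldots, n-2\}$ and $\mu(F) \in [-1, 0)$, this slope lies in $(1-n, 0]$, and the same section-versus-subsheaf argument kills $H^0$ whenever the slope is strictly negative.

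The subtle step, and the only place where the hypothesis $\rk(F) > 1$ enters, is the equality case, which occurs precisely when $\mu(F) = -1$ and $i = n-2$, giving the slope-zero sheaf $F^\vee(-H)$. Here I would argue that a hypothetical section $\O_Y \to F^\vee(-H)$ would produce a proper subsheaf (proper because $\rk(\O_Y) = 1 < \rk(F^\vee(-H))$) of slope equal to $\mu(F^\vee(-H))$, again violating strict $\mu$-stability; without the rank hypothesis the statement visibly fails, as the line bundle $F = \O_Y(-H)$ is balanced and $\mu$-stable but certainly does not lie in $\mathbf{T}_Y$. Combining the two boundary cases with the ACM vanishing in intermediate degrees then yields $F \in \mathbf{T}_Y$.
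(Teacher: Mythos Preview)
Your proof is correct and follows essentially the same approach as the paper: the ACM hypothesis handles the intermediate degrees, the inequality $\mu(F)<0$ kills $H^0$, Serre duality together with $\mu(F)\geq -1$ kills $H^n$, and the strict stability plus $\rk(F)>1$ is invoked exactly at the boundary case $i=n-2$, $\mu(F)=-1$. The only cosmetic difference is that the paper phrases the top-degree step as $\Hom(F,\O_Y((j-n+1)H))=0$ rather than passing to $F^\vee$, but this is the same computation.
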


It was observed in \cite{LMS3}, that the lemma above is slightly more general and the same proof works for a balanced ACM bundle of rank greater than one, if it is $\mu$-semistable and $\Hom(F,\O_Y(-H))=0$.

\begin{rem}\label{rem:sympl}
When $n=4$, the Serre functor of the subcategory $\cat{T}_Y$ is isomorphic to the shift by $2$ (see \cite[Thm.~4.3]{Kuz:4fold}).
Thus, as an application of the result above and \cite[Thm.~4.3]{KM}, one gets that the smooth locus of any moduli space of $\mu$-stable ACM vector bundles on $Y$ carries a closed symplectic form.
\end{rem}

\begin{lem}\label{lem:viceversa}{\bf(\cite{LMS3}, Lemma 1.8.)}
	Let $Y\subset \PP^{n+1}$ be a smooth cubic $n$-fold and let $F\in \coh(Y) \cap \cat{T}_Y$.
	Assume
	\begin{equation}\label{eqn:vanishing1}
		\begin{split}
		& H^1(Y,F(H))=0\\
		& H^1(Y,F((1-n)H))=\ldots=H^{n-1}(Y,F((1-n)H))=0.
		\end{split}
	\end{equation}
	Then $F$ is an ACM bundle.
\end{lem}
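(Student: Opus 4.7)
\emph{Plan.} The plan is to establish both halves of the ACM condition in turn: the middle-cohomology vanishings $H^i(Y,F(jH))=0$ for $i=1,\dots,n-1$ and all $j\in\ZZ$, and the local freeness of $F$.

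Since $F\in\cat{T}_Y$, one has $H^p(Y,F(-iH))=0$ for all $p$ and $i=0,\dots,n-2$; combined with the hypothesis $H^i(Y,F((1-n)H))=0$ for $i=1,\dots,n-1$, this already gives the ACM vanishings for every twist $j\in\{-(n-1),\dots,0\}$. To extend upward to $j\ge 1$, I would proceed by induction on $j$, using a general smooth hyperplane section $Y'\subset Y$ (itself a smooth cubic $(n-1)$-fold) and the short exact sequence
\[
0\to F((j-1)H)\to F(jH)\to F(jH)|_{Y'}\to 0.
\]
For the base case $j=1$, the vanishing $H^p(Y,F)=0$ gives $H^i(Y,F(H))\cong H^i(Y',F(H)|_{Y'})$ for $i\ge 1$; the hypothesis $H^1(Y,F(H))=0$ handles $i=1$, while for $i=2,\dots,n-1$ one iterates the restriction through further hyperplane sections of $Y'$, eventually descending to a smooth cubic curve where the required vanishings can be read off by Riemann--Roch. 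The inductive step $j\ge 2$ is then a diagram chase in the sequence above using the previously established ACM vanishings.

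For local freeness, Serre duality with $\omega_Y\cong\O_Y(-(n-1)H)$ yields
\[
\Ext^i_Y(F,\O_Y)\cong H^{n-i}(Y,F((1-n)H))^\vee,
\]
which is zero for $i=1,\dots,n-1$ by the hypothesis. The top $\Ext^n_Y(F,\O_Y)\cong H^0(Y,F((1-n)H))^\vee$ vanishes because a general section of $\O_Y((n-2)H)$ (very ample for $n\ge 3$) avoids the associated points of $F$ and embeds $F((1-n)H)$ into $F(-H)$, whose global sections are zero since $F\in\cat{T}_Y$. Running the analogous computation for every twist $F(kH)$ and invoking the middle-cohomology vanishings (once established for all $j$) produces $\Ext^i_Y(F(kH),\O_Y)=0$ for all $i\ge 1$ and all $k$; the local-global spectral sequence for $\cExt$ then forces $\cExt^i(F,\O_Y)=0$ for $i\ge 1$, so $F$ is locally free. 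For the remaining downward vanishings $j\le -n$, one checks via Serre duality that $F^\vee(-H)$ satisfies exactly the same hypotheses as $F$, and applies the upward induction to $F^\vee(-H)$.

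The main obstacle is the circular dependency between local freeness and the downward vanishings: the $\cExt$ argument needs $\Ext^i_Y(F,\O_Y(kH))=0$ for $k\gg 0$, which is equivalent to the downward vanishing of $F$, whereas the downward vanishings themselves are obtained by Serre duality after passing to $F^\vee(-H)$, which requires $F$ to be locally free first. In practice these two steps must be interleaved in a bootstrap argument. A secondary technical point is the iterative descent to a smooth cubic curve in the base case $j=1$ of the upward induction, where one must verify that the analogues of the hypotheses persist at each stage of the descent.
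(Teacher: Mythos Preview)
Your proposal has a genuine gap, and you correctly identify it yourself: the circular dependency between local freeness and the downward vanishings is not resolved. You say ``these two steps must be interleaved in a bootstrap argument,'' but no such bootstrap is supplied, and it is not clear that one exists along these lines. Concretely, your argument for $\cExt^i(F,\O_Y)=0$ needs $\Ext^i(F,\O_Y(kH))=0$ for $k\gg 0$, which by Serre duality is exactly $H^{n-i}(Y,F((1-n-k)H))=0$ for $k\gg 0$, i.e., the middle cohomology vanishings for arbitrarily negative twists. Your only route to those is to dualize to $F^\vee(-H)$, and that dualization is only legitimate once $F$ is known to be locally free. Moreover, even granting local freeness, to verify that $F^\vee(-H)$ satisfies the hypothesis $H^i(Y,F^\vee(-H)((1-n)H))=0$ for $i=1,\dots,n-1$ unwinds to $H^j(Y,F(H))=0$ for all $j=1,\dots,n-1$, so you would also need the full upward base case before dualizing.

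The paper avoids this circularity by a different device: it pushes $F$ forward to $\PP^{n+1}$ and runs the Beilinson spectral sequence for $i_*F(m)$. Because $F\in\cat{T}_Y$, an entire vertical strip of the $E_1$-page vanishes, and the remaining hypotheses kill enough further entries that one reads off, in one stroke, the middle vanishings for $j\in\{-n,\dots,1\}$, together with $H^0(Y,F(jH))=0$ for all $j\le 0$ and $H^n(Y,F(jH))=0$ for all $j\ge 2-n$. An induction on $m$ (again via Beilinson) then extends the middle vanishings to all $j\in\ZZ$ \emph{without} any appeal to local freeness. With all these vanishings in hand---including the crucial $H^0$-vanishings for negative twists, which handle the case $i=n$---the local-to-global spectral sequence gives $\cExt^i(F,\O_Y)=0$ for $i>0$ by the contradiction argument you sketch. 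The point is that Beilinson supplies the downward information directly, so nothing circular occurs.

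A smaller issue: your iterative descent to a cubic curve for the $j=1$ base case is underspecified. What actually works is an induction on $n$: one checks that $G:=F|_{Y'}$ again lies in $\cat{T}_{Y'}$ and satisfies the analogous hypotheses on the cubic $(n-1)$-fold $Y'$ (this is a short computation using the restriction sequence and the known vanishings for $F$). But ``reading off by Riemann--Roch'' on a cubic curve is not the right endpoint---there is no middle cohomology on a curve---so the base case must be handled separately, and you have not done so.
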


\subsection{Geometry of cubic fourfolds with a plane}\label{subsec:4foldsgeom}

In this section, we let $Y\subset\PP^5$ be a cubic fourfold containing a plane $P$.
Consider the blow-up $\uPP$ of $\PP^5$ along $P$.
We set $q: \uPP \to \PP^2$ to be the $\PP^3$-bundle induced by the projection from $P$ onto a plane and we denote by $\uY$ the strict transform of $Y$ via this blow-up.
The restriction of $q$ to $\uY$ induces a quadric fibration $\pi: \uY \to \PP^2$.
Note that $\uPP=\PP_{\PP^2}(E)$, where $E$ is the vector bundle $\O_{\PP^2}^{\oplus 3} \oplus \O_{\PP^2}(-h)$ on $\PP^2$.

The fibres of $\pi$ degenerate along a sextic $C\subset \PP^2$.
The curve $C$ has at most ordinary double points.
On the one hand, over the smooth points of $C$ the fibres are cones with one singular point.
On the other hand, over the singular points of $C$ the fibre is the union of two planes intersecting along a line.
For the general cubic fourfold containing a plane, the sextic $C$ is smooth.
The double cover over $f\colon S\to\PP^2$ ramified along $C$ is a projective K3 surface (singular over the singular points of $C$).
The geometric picture can be summarized by the following diagram
\begin{equation}\label{eqn:4fold}
	\xymatrix{
	D\ar[d]_-{s}\ar@{^{(}->}[r]&\uY \ar@{^{(}->}[rr]^-{\alpha}\ar[d]_-{\sigma}\ar[rrd]_-{\pi}&&
	\uPP=\PP_{\PP^2}(\O_{\PP^2}^{\oplus 3}\oplus \O_{\PP^2}(-h))\ar[d]_-{q}& \\
	P\ar@{^{(}->}[r]& Y\subset \PP^5 && C\subset\PP^2& S\ar[l]^-{f}.
	}
\end{equation}
We let $D \subset \uY$ be the exceptional divisor of the blow-up $\sigma: \uY \to Y$.
We denote by $h$ both the class of a line in $\PP^2$ and its pull-backs to $\uPP$ and $\uY$ and, accordingly, we call $H$ both the class of a hyperplane in $\PP^5$ and its pull-backs to $Y$, $\uPP$, and $\uY$.
We recall that $\O_{\uY}(D)\cong\O_{\uY}(H-h)$, the relative ample line bundle is $\O_{\uPP}(H)$, the relative canonical bundle is $\O_{\uPP}(h-2H)$, and the dualizing sheaf of $\uY$ is $\omega_{\uY}\cong \O_{\uY}(-h-2H)$ (see, e.g., \cite[Lem.~4.1]{Kuz:4fold}).

\smallskip

According to \cite{Kuz:Quadric}, the quadric fibration $\pi:\uY\to\PP^2$ carries a \emph{sheaf $\B$ of Clifford algebras} which is the relative sheafified version of the classical Clifford algebra associated to a quadric on a vector space (more details can be found in \cite[Sect.\ 3]{Kuz:Quadric}).
As in the absolute case, $\B$ has an \emph{even part} $\B_0$ whose description as an $\O_{\PP^2}$-module is as follows
\[
\B_0\cong\O_{\PP^2}\oplus(\wedge^2 E\otimes L)\oplus(\wedge^4 E\otimes L^2)\oplus\ldots,
\]
where $L:=\O_{\PP^2}(-h)$. The \emph{odd part} $\B_1$ of $\B$ is such that
\[
\B_1\cong E\oplus(\wedge^3 E\otimes L)\oplus(\wedge^5 E\otimes L^2)\oplus\ldots
\]
We also denote $\B_{2i}=\B_0\otimes L^{-i}$ and $\B_{2i+1}=\B_1\otimes L^{-i}$. As sheaves of $\O_{\PP^2}$-modules, we have the following isomorphisms
\begin{equation}\label{bcoh1}
	\begin{split}
	\B_0&\cong\O_{\PP^2}\oplus\O_{\PP^2}(-h)^{\oplus 3}\oplus\O_{\PP^2}(-2h)^{\oplus 3}\oplus\O_{\PP^2}(-3h),\\
	\B_1&\cong\O_{\PP^2}^{\oplus 3}\oplus\O_{\PP^2}(-h)^{\oplus 2}\oplus\O_{\PP^2}(-2h)^{\oplus 3}.
	\end{split}
\end{equation}

The category $\coh(\PP^2,\B_0)$ is the abelian category of coherent $\B_0$-modules on $\PP^2$ and $\Db(\PP^2,\B_0)$ is its derived category.

\smallskip

As explained in \cite{Kuz:Quadric}, there exists a fully faithful functor $\Phi:=\Phi_{\E'}:\Db(\PP^2,\B_0)\to\Db(\uY)$ is defined as the Fourier--Mukai transform
\begin{equation*}\label{eqn:defemb}
	\Phi_{\E'}(-):=\pi^*(-)\otimes_{\pi^*\B_0}\E',
\end{equation*}
where $\E'\in\coh(\uY)$ is a rank $4$ vector bundle on $\uY$ with a natural structure of flat left $\pi^*\B_0$-module defined by the short exact sequence
\begin{equation}\label{eqn:defE'}
	0\longrightarrow q^*\B_0(-2H)\longrightarrow q^*\B_1(-H)\longrightarrow\alpha_*\E'\longrightarrow
0.
\end{equation}
Such an exact functor has a left adjoint
\begin{equation}\label{eqn:bascoh}
	\Psi(-):=\pi_*((-)\otimes_{\O_\uY}\E\otimes_{\O_\uY}\O_{\uY}(h)[1]),\\
\end{equation}
where $\E\in\coh(\uY)$ is another rank $2$ vector bundle with a natural structure of right $\pi^*\B_0$-module (see again \cite[Sect.\ 4]{Kuz:Quadric}).
The analogous presentation of $\E$ is
\begin{equation}\label{eqn:defE}
	0\longrightarrow q^*\B_{1}(-h-2H)\longrightarrow q^*\B_0(-H)\longrightarrow\alpha_*\E\longrightarrow
0.
\end{equation}

\smallskip

In \cite[Thm.~4.3]{Kuz:4fold}, Kuznetsov constructs an equivalence $\Db(\PP^2,\B_0)\cong\cat{T}_Y$, where $\cat{T}_Y$ is the full subcategory in \eqref{eqn:semiorthgen}.
The way this equivalence is obtained is by performing a precise sequence of mutations which allow Kuznetsov to compare the semiorthogonal decomposition of the derived category of $\uY$ in \cite[Thm.~4.2]{Kuz:Quadric} and the one
\begin{equation*}
	\Db(\uY)=\langle\sigma^*(\cat{T}_Y),\O_{\uY},\O_{\uY}(H),\O_{\uY}(2H),i_*\O_{D},i_*\O_{D}(H),i_*\O_{D}(2H)\rangle
\end{equation*}
obtained by thinking of $\uY$ as the blow-up of $Y$ along $P$ and using \cite{Orlov}.
The details will not be needed in the rest of this paper but we just recall that
\begin{equation*}
	\cat{T}_Y = \sigma_*\circ\cat{L}_{\O_{\uY}(h-H)}\circ \cat{R}_{\O_{\uY}(-h)} \circ \Phi (\Db(\PP^2,\B_0)).
\end{equation*}
For later use we set
\begin{equation}\label{eqn:Xi4fold}
	 \Xi:=(\sigma_*\circ\cat{L}_{\O_{\uY}(h-H)}\circ\cat{R}_{\O_{\uY}(-h)}\circ\Phi)^{-1}:\cat{T}_Y\to \Db(\PP^2,\B_0).
\end{equation}

\subsection{The derived category of the associated K3 surface}\label{subsec:K3}

Let $S$ be the K3 surface associated to a cubic fourfold containing a plane, as in \eqref{eqn:4fold}.
By \cite[Sect.\ 3.5]{Kuz:Quadric}, there exists a sheaf of algebras $\A_0$ such that $f_*(\A_0)=\B_0$ and $f_*:\coh(S,\A_0) \to \coh(\PP^2,\B_0)$ is an equivalence.
Moreover, by \cite[Prop.~3.13]{Kuz:Quadric}, $\A_0$ restricted to the smooth locus $S_{\mathrm{reg}}$ of $S$ is a sheaf of Azumaya algebras.
For the basic properties of Azumaya algebras, see \cite[Ch. IV]{Milne} or \cite[Chapter 1]{C}.

When $C$ is smooth, we can describe the category $\cat{Coh}(S,\A_0)$ in terms of twisted sheaves.
More precisely, there exist $\alpha \in \mathrm{Br}(S)$ in the \emph{Brauer group} of $S$, $\alpha^2=\id$, and an $\alpha$-twisted vector bundle of rank 2, $E_\alpha\in \coh(S,\alpha)$, such that $\A_0=\mathcal{E}nd(E_{\alpha})$ and
\begin{equation*}
	 \left.\begin{array}{rcl}
	 {\coh(S,\alpha)}&\stackrel{\sim}\lra &{\coh(S,\A_0)}\\{F}&\longmapsto& {F\otimes E_\alpha^\vee=\mathcal{H}om(E_{\alpha},F)}
	 \end{array}\right.
\end{equation*}
is an equivalence of categories.
When $C$ is singular, the vector bundle $E_\alpha$ still exists \'etale locally on smooth points.

Let $x\in S_{\mathrm{reg}}$.
Consider $L_x:=f_*(\CC(x)\otimes E_\alpha^\vee)\in \coh(\PP^2,\B_0)$.
As an $\O_{\PP^2}$-module it is isomorphic to $V\otimes_\CC \CC(f(x))$, where $V$ is a $2$-dimensional $\CC$-vector space. The structure of $\B_0$-module on $L_x$ is provided by the following result:
\begin{lem}\label{lem:B0act}
	\begin{itemize}
		\item[{\rm(i)}] If $f(x)\not\in C$, then $\res{\B_0}{f(x)}\cong\Mat(V)\times\Mat(V)$ and it acts on $V$ via one of the two projections $\Mat(V)\times\Mat(V)\to\Mat(V)$.
		\item[{\rm(ii)}] If $f(x)\in C_{\mathrm{reg}}$, then $\res{\B_0}{f(x)}\cong\Mat(V)$ and it acts on $V$ via the standard representation.
	\end{itemize}
\end{lem}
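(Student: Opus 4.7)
The plan is to compute $\res{\B_0}{f(x)}$ and its action on $\res{L_x}{f(x)}=V$ by a local analysis at the point $f(x)\in\PP^2$, distinguishing the \'etale case (i) from the simply ramified case (ii) of the double cover $f$ and using that $\A_0$ is Azumaya of degree $2$ on $S_{\mathrm{reg}}$.

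The key input, from Proposition~\ref{prop:Azumaya}, is that \'etale-locally at the smooth point $x$ one can write $\A_0\cong\cEnd(E_\alpha)$, so setting $V:=\res{E_\alpha^\vee}{x}$ gives $\res{\A_0}{x}\cong\Mat(V)$ with its standard representation on $V$; moreover the skyscraper $\CC(x)\otimes E_\alpha^\vee$ is annihilated by $\m_x$ and has $\A_0$-module structure given by the standard $\Mat(V)$-action on $V$.

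For (i), I would use that, since $f$ is \'etale over $f(x)\notin C$, the scheme-theoretic fiber $f^{-1}(f(x))$ is the reduced scheme $\{x\}\sqcup\{x'\}$; flat base change for the finite morphism $f$ then decomposes
\[
\res{\B_0}{f(x)}=\res{(f_*\A_0)}{f(x)}\cong \res{\A_0}{x}\oplus \res{\A_0}{x'}\cong\Mat(V)\times\Mat(V')
\]
as a product of algebras, and since the underlying sheaf of $L_x$ lives only over $x$ the second factor acts as zero, leaving the first-projection action as the standard representation, which is precisely the description in (i).

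For (ii), the analysis is similar but the fiber of $f$ over the branch point $f(x)\in C_{\mathrm{reg}}$ is non-reduced of length two, supported at $x$; however $\CC(x)\otimes E_\alpha^\vee$ is annihilated by $\m_x$, so the induced $\B_0$-action on $V=\res{L_x}{f(x)}$ factors through the reduction $\res{\A_0}{x}\cong\Mat(V)$ and is again the standard representation. The main point to take care of is precisely this passage from the non-reduced scheme-theoretic fiber of $f_*\A_0$ in case (ii) to the effective algebra acting on $L_x$; but this is automatic since the $\A_0$-module producing $L_x$ is already scheme-theoretically supported at the reduced point $x$.
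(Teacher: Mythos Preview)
Your argument is correct and is essentially a detailed unpacking of the paper's proof, which merely cites \cite[Lemma~2.6 and Prop.~3.13]{Kuz:Quadric}. Proposition~\ref{prop:Azumaya} is exactly Kuznetsov's Prop.~3.13, so you are invoking the same input; the paper's additional reference to Lemma~2.6 gives the pointwise structure of even Clifford algebras directly, whereas you recover it by base-changing $f_*\A_0$ along the double cover, which is an equivalent route.

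One small remark on case~(ii): the literal assertion $\res{\B_0}{f(x)}\cong\Mat(V)$ in the statement is a slight abuse, since $\B_0$ has rank~$8$ while $\Mat(V)$ is $4$-dimensional; the fiber is rather $\Mat(V)\otimes_\CC \CC[\epsilon]/(\epsilon^2)$, with nilpotent radical $\epsilon\Mat(V)$. Your proof wisely does not attempt to establish that isomorphism and instead shows exactly what is used later, namely that the $\B_0$-action on $L_x$ factors through the semisimple quotient $\res{\A_0}{x}\cong\Mat(V)$ acting via the standard representation. That is the content actually needed, and your justification via the support of $\CC(x)\otimes E_\alpha^\vee$ is clean.
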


\begin{proof}
	This follows directly from \cite[Lem.~2.6 and Prop.~3.13]{Kuz:Quadric}.
\end{proof}

\section{A family of sheaves on cubic fourfolds containing a plane}\label{sec:family1}

As a first step in the proof of Theorem \ref{thm:main4folds}, in this section we want to describe a family of bundles parametrized by the K3 surface $S$ described in Section \ref{subsec:K3}.
More precisely, for $x\in S_{\mathrm{reg}}$ and setting
\begin{equation}\label{eqn:Mx}
	M_x:=\Xi^{-1}(L_x)[-1] \in \Db(Y)
\end{equation}
we get the following.

\begin{prop}\label{prop:family}
	For all $x\in S_{\mathrm{reg}}$, the object $M_x$ is a coherent sheaf with class
	 \[
	 \mathrm{ch}(M_x)=(4,-2H,-P,l,\tfrac{1}{4})\in H^*(Y,\QQ).
	 \]
\end{prop}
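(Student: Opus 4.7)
The plan is to compute $\Xi_4^{-1}(L_x)$ explicitly by running the composition
$\sigma_* \circ \cat{L}_{\O_{\uY}(h-H)} \circ \cat{R}_{\O_{\uY}(-h)} \circ \Phi$
stage by stage, identifying the intermediate objects as honest sheaves (up to a single shift) and tracking Chern characters via Grothendieck--Riemann--Roch. Both assertions of the proposition, sheafiness and the specific class, should drop out of the same calculation.

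First I would identify $\Phi(L_x)$ geometrically. Since $x \in S_{\mathrm{reg}}$, Lemma \ref{lem:B0act} presents $L_x$ as a spinor representation at $f(x) \in \PP^2$, so $\Phi(L_x) = \pi^*(L_x) \otimes_{\pi^*\B_0} \E'$ is supported on the quadric fibre $Q_x := \pi^{-1}(f(x))$ and takes the form $\iota_{Q_x *}\mathcal{S}_x$, with $\mathcal{S}_x$ a spinor sheaf on $Q_x$. When $f(x) \notin C$, $Q_x \cong \PP^1 \times \PP^1$ and $\mathcal{S}_x$ pulls back from one of the two rulings (the two choices in Lemma \ref{lem:B0act}(i) corresponding to the two spinor factors); when $f(x) \in C_{\mathrm{reg}}$, $Q_x$ is a quadric cone and $\mathcal{S}_x$ is the unique spinor sheaf. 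In both cases $\Phi(L_x) \in \coh(\uY)$. Next I would apply the two mutations using the evaluation triangles in \eqref{eqn:LRmutation}: the $\RHom$ groups $\RHom_{\uY}(\Phi(L_x), \O_{\uY}(-h))$ and $\RHom_{\uY}(\O_{\uY}(h-H), -)$ reduce to Bott-type cohomology computations for spinor sheaves on $Q_x$, which I expect to yield cones concentrated in a single cohomological degree. Finally $\sigma_*$ lands the result in degree $1$ on $Y$, so shifting by $[-1]$ gives the sheaf $M_x$.

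The Chern character is then propagated through each step: the classes of $\E'$ and $\E$ are explicit from \eqref{eqn:defE'} and \eqref{eqn:defE}, the mutation cones are controlled by the $\RHom$'s computed above, and $\sigma_*$ is handled by the projection formula together with the identity $D = H - h$ on $\uY$ from Section \ref{subsec:4foldsgeom}. The hard part will be the singular-fibre case $f(x) \in C_{\mathrm{reg}}$: one must check that the spinor sheaf on a quadric cone and the subsequent mutations still produce an object concentrated in a single cohomological degree, so that the shift by $-1$ really lands $M_x$ in $\coh(Y)$. A practical shortcut is to establish the proposition first for generic $x$ (with $Q_x$ smooth, where the Bott computations are routine), and then extend to all $x \in S_{\mathrm{reg}}$ by flatness of the family $\{L_x\}_{x \in S_{\mathrm{reg}}}$ and semi-continuity, which forces both the sheafiness and the constancy of the Chern character on $S_{\mathrm{reg}}$.
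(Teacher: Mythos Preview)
Your overall plan coincides with the paper's: it also runs the composition $\sigma_* \circ \cat{L}_{\O_{\uY}(h-H)} \circ \cat{R}_{\O_{\uY}(-h)} \circ \Phi$ stage by stage. The paper's first step is the precise identification $\Phi(L_x) \cong \I_{\ul_x,\uQ_{f(x)}}$, obtained from the matrix-factorization presentation \eqref{eqn:compLx}--\eqref{eqn:matrixf}; this is of course exactly your ``spinor sheaf'', but having the explicit two-term resolution $0 \to \O_{\PP^3}(-2)^{\oplus 2} \to \O_{\PP^3}(-1)^{\oplus 2} \to \alpha_*\I_{\ul,\uQ} \to 0$ in hand is what makes all the subsequent $\RHom$ computations concrete.

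There is, however, a genuine gap in your proposed shortcut for the case $f(x) \in C_{\mathrm{reg}}$. Semi-continuity runs the wrong way: knowing that $M_x$ is a sheaf for generic $x$ does \emph{not} force $\H^i(M_x) = 0$ for $i \neq 0$ at special $x$. In a family of perfect complexes, extra cohomology sheaves can \emph{appear} under specialization, not disappear (e.g.\ $[\O_{\mathbb{A}^1} \xrightarrow{t} \O_{\mathbb{A}^1}]$ restricts to zero at $t \neq 0$ but to a length-two complex at $t=0$). You are right that constancy of $\mathrm{ch}(M_x)$ does follow this way, since the class in $K(Y)$ is locally constant for any family defined by a kernel; it is only the sheafiness claim that fails.

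The good news is that the shortcut is unnecessary. Because the matrix-factorization resolution of $\alpha_*\I_{\ul,\uQ}$ is valid whether $\uQ$ is a smooth quadric or a quadric cone, the cohomology groups entering the two mutation triangles (namely $H^{4-p}(\uQ,\I_{\ul,\uQ}(-2))$ and $H^p(\uY,\I_{\ul,\uQ}(H-h))$) are computed uniformly from that resolution and give the same answer in both cases. The paper therefore treats all $x \in S_{\mathrm{reg}}$ at once and never splits into cases; the only trace of the degeneration is that in the auxiliary description of $K_x$ one sets the ``opposite ruling'' line $l'_x$ equal to $l_x$ when $\uQ_{f(x)}$ is a cone.
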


The proof will be carried out in the rest of this section and it is divided up in several steps.
Moreover, in Proposition \ref{prop:family2}, we will show that $M_x$ is actually a stable ACM bundle.

\smallskip

We denote by $\uQ_{f(x)}$ the fibre of $\pi\colon\uY\to\PP^2$ over $f(x)$ and by $\ul_x\subset \uQ_{f(x)}$ any line in the ruling corresponding to $x\in S_{\mathrm{reg}}$.
Recall that the points in $S_{\mathrm{reg}}$ parametrize rulings of lines in the quadric fibration $\pi$.
When it is clear from the context, we will denote $\uQ_{f(x)}$ simply by $\uQ$ and $\ul_x$ by $l$.

\subsection*{Step 1: Kuznetsov's embedding}

We first want to prove that
\begin{equation}\label{eqn:Kuz}
	\Phi(L_x)\cong \I_{\ul_x,\uQ_{f(x)}},
\end{equation}
where $\I_{\ul_x,\uQ_{f(x)}}$ is the ideal sheaf of $\ul_x$ in $\uQ_{f(x)}$.

Indeed, by definition $\Phi(L_x)= \pi^*L_x \otimes_{\pi^*\B_0} \E' \in \Db(\uY)$.
Since $\pi$ is flat and $\E'$ is $\pi^*\B_0$-flat, we have $\Phi(L_x)\in \coh(\uY)$.
As $\alpha$ is a closed immersion, also $\alpha_* \Phi(L_x)$ is a sheaf, i.e., $\alpha_* \Phi(L_x) \in \coh(\uPP)$.
We have
\[
\begin{split}
 \alpha_* \Phi(L_x) &\cong \alpha_*(\pi^*L_x \otimes_{\pi^*\B_0} \E')\\
&\cong q^*L_x \otimes_{q^*\B_0} \alpha_*\E'.
\end{split}
\]
where we used the Projection Formula for the fist isomorphism and the identity $\pi=q\circ\alpha$ for the second one.
Since $\alpha_* \Phi(L_x)$ is a sheaf, if we tensor the exact sequence \eqref{eqn:defE'} by $q^*L_x$, then we get an exact sequence:
\begin{equation}\label{eqn:compLx}
	0\to q^*L_x \otimes_{q^*\B_0}q^*{\mathcal B}_0(-2H)\stackrel{\delta}\to q^*L_x \otimes_{q^*\B_0}q^*{\mathcal B}_1(-H)\to q^*L_x
	\otimes_{q^*\B_0}\alpha_*{\mathcal E}'\to0.
\end{equation}

The first term in the previous exact sequence is
\begin{align*}
	 q^*L_x \otimes_{q^*\B_0}q^*{\mathcal B}_0(-2H)&\cong (q^*L_x \otimes_{q^*\B_0}q^*{\mathcal
	B}_0)\otimes_{\O_{\uPP}}\O_{\uPP}(-2H) \\
	&=q^*L_x \otimes_{\O_{\uPP}}\O_{\uPP}(-2H)\\
	&=\O_{\PP^3}(-2)^{\oplus 2},
\end{align*}
where in the first isomorphism we have used the non-commutative associativity (see, for example, {\cite[Prop.~A2.1]{eisenbud}}) and the fact that $q^*{\mathcal
B}_0(-2H)=q^*{\mathcal B}_0\otimes_{\O_{\uPP}}\O_{\uPP}(-2H)$.

The action of $\B_0$ on $L_x$ is controlled by Lemma \ref{lem:B0act}.
The map $\delta$ in \eqref{eqn:compLx} corresponds to $\delta_{-1,1}$ in \cite[Sect.~3.4]{Kuz:Quadric}.
Hence, the second term of \eqref{eqn:compLx} is $q^*L_x \otimes_{q^*\B_0}q^*\B_1(-H)\cong \O_{\PP^3}(-1)^{\oplus 2}$ and
\begin{equation}\label{eqn:matrixf}
	q^*L_x \otimes_{q^*\B_0}q^*{\mathcal B}_0(-2H)\cong \O_{\PP^3}(-2)^{\oplus 2}
	\stackrel{\delta}\to \O_{\PP^3}(-1)^{\oplus 2} \cong
	q^*L_x \otimes_{q^*\B_0}q^*{\mathcal B}_1(-H).
\end{equation}
is the ``matrix factorization'' of the quadric $\uQ_{f(x)}=\pi^{-1}(f(x))$.
Therefore, $\Phi(L_x)$ is the cokernel of the matrix factorization map, namely the ideal sheaf $\I_{\ul_x,\uQ_{f(x)}}$.
This is well explained in \cite{Addington:PhD}.

\subsection*{Step 2: the right mutation}
Set $\Phi'(L_x):=\cat{R}_{\O_{\uY}(-h)}\Phi(L_x)$.
We want to show that $\Phi'(L_x)$ sits in the following distinguished triangle
\begin{equation}\label{eqn:phi2Lx}
	 \O_{\uY}(-h)^{\oplus 2}[1]\to \Phi'(L_x) \to \I_{\ul_x,\uQ_{f(x)}}.
\end{equation}

By Step 1, we have
\begin{equation*}
	\Phi'(L_x)=\cat{R}_{\O_{\uY}(-h)}(\I_{\ul,\uQ})= {\rm cone} \left( \I_{\ul,\uQ} \stackrel{\rm ev^\vee}\lra \RHom(\I_{\ul,\uQ}, \O_{\uY}(-h))^\vee \otimes
	\O_{\uY}(-h) \right) [-1].
\end{equation*}
Now, we observe that
\[
 \Ext^p(\I_{\ul,\uQ}, \O_{\uY}(-h))\cong H^{4-p}({\uY},\I_{\ul,\uQ}(-2H))\cong H^{4-p}(\uQ,\I_{\ul,\uQ}(-2)),
\]
where the first isomorphism follows from Serre duality and the fact that $K_{\uY}=-2H-h$, while for the second one, we use that $\O_{\uPP}(H)$ is a relative ample line bundle.

Since $\ul$ is a line in the quadric $\uQ$, we have
\[
H^{4-p}(\uQ,\I_{\ul,\uQ}(-2))={\begin{cases}
 \CC^2 &\text{if }p=2\\
 0 &\text{otherwise.}
 \end{cases}}
\]
Hence we get the distinguished triangle \eqref{eqn:phi2Lx}.

\subsection*{Step 3: the left mutation}
Set $\Phi''(L_x):=\cat{L}_{\O_{\uY}(h-H)}\cat{R}_{\O_{\uY}(-h)}\Phi(L_x)$.
We want to show that $\Phi''(L_x)[-1]$ is isomorphic to a sheaf sitting in the following non-split short exact sequence
\begin{equation}\label{eqn:phi3Lx}
	0\longrightarrow \O_{\uY}(-h)^{\oplus 2}\longrightarrow \Phi''(L_x)[-1]\longrightarrow \uK_x\longrightarrow 0,
\end{equation}
where $\uK_x$ is defined as the kernel of the evaluation map $\O_{\uY}(h-H)^{\oplus 2} \stackrel{\rm ev}\longrightarrow \I_{\ul_x,\uQ_{f(x)}}$.

We have by definition
\begin{equation*}
	\Phi''(L_x)= {\rm cone} \left( \RHom(\O_{\uY}(h-H),\Phi'(L_x)) \otimes
	\O_{\uY}(h-H)\stackrel{\rm ev}\lra \Phi'(L_x) \right).
\end{equation*}

By Step 2, we need to compute $\Ext^p(\O_{\uY}(h-H),\O_{\uY}(-h)^{\oplus 2})$ and $\Ext^p(\O_{\uY}(h-H),\I_{\ul,\uQ})$.
On the one hand, we have the following natural isomorphisms
\[
\begin{split}
\Ext^p(\O_{\uY}(h-H),\O_{\uY}(-h)^{\oplus 2})=
&\cong H^{p}({\uY},\O_{\uY}(-2h+H))^{\oplus 2}\\
&\cong H^{4-p}(\uY,\O_{\uY}(h-3H))^{\oplus 2}\\
&\cong H^{4-p}(\uY,\O_{\uY}(-D)\otimes \O_{\uY}(-2H) )^{\oplus 2}\\
&\cong H^{4-p}(Y,\I_{P,Y}(-2H))^{\oplus 2}=0,
\end{split}
\]
where the first isomorphisms follows from Serre duality and of the fact that $K_{\uY}=-2H+h$.
For the third one we use that $D$ is the exceptional divisor of $\sigma$.
On the other hand,
\begin{align*}
	\Ext^p(\O_{\uY}(h-H),\I_{\ul,\uQ})=H^{p}({\tilde
	Y},\I_{\ul,\uQ}(H))=\begin{cases}
	 \CC^2 &p=0\\ 0&\text{otherwise.}
	 \end{cases}
\end{align*}
Therefore, $\Phi''(L_x)={\rm cone} \left( \O_{\uY}(h-H)^{\oplus 2}\stackrel{\rm ev}\lra \Phi'(L_x) \right)$.

Taking cohomology and using the results in Step 2, we get
\begin{equation*}
	 0\to \O_{\uY}(-h)^{\oplus 2} \to \H^{-1}(\Phi''(L_x)) \to \O_{\tilde
	Y}(h-H)^{\oplus
	2} \stackrel{\rm ev}\to \I_{\ul,\uQ}\to \H^{0}(\Phi''(L_x))\to 0
\end{equation*}
Note that the evaluation map $\O_{\uY}(h-H)^{\oplus 2} \stackrel{\rm ev}\to \I_{\ul,\uQ}$ is surjective and non-split.
Hence $\H^{0}(\Phi''(L_x))= 0$ and $\Phi''(L_x)$ sits in the non-split triangle
\begin{equation*}
	 \O_{\uY}(-h)^{\oplus 2}[1]\to \Phi''(L_x) \to \uK_x[1].
\end{equation*}

\subsection*{Step 4: the blow-up}
Finally we prove that $M_x=\Xi^{-1}(L_x)[-1]$ is isomorphic to a sheaf sitting in the non-split short exact sequence
\begin{equation}\label{eqn:laM}
	 0\to \O_{Y}(-H)^{\oplus 2}\to M_x \to K_x \to 0,
\end{equation}
where $K_x$ is defined as the kernel of the evaluation map $\I_{P,Y}^{\oplus 2} \stackrel{\rm ev}\longrightarrow \I_{l_x,Q_{f(x)}}$.
Here $P\subset Y$ is the plane contained in $Y$ and $l_x$ and $Q_{f(x)}$ are the images of $\ul_x$ and $\uQ_{f(x)}$.

Indeed, we know that $\Phi''(L_x)$ is an element on $\sigma^*\cat{T}_Y$.
Hence, by the Projection Formula, $M_x$ is a sheaf.
We only need to study $\sigma_*\uK_x$ because $\sigma_*\O_{\uY}(-h)\cong\O_Y(-H)$.
The fact that $M_x\in\coh(Y)$ implies that $\sigma_*\uK_x$ is also a sheaf that we denote by $K_x$.
Since $\res{\sigma}{\uQ}$ is an isomorphism, $\sigma_*\I_{\ul_x,\uQ_{f(x)}}\cong\I_{l_x,Q_{f(x)}}$.
Since $D=H-h$ is the exceptional divisor of $\sigma$, we have $\sigma_*\O_{\uY}(h-H)\cong \I_{P,Y}$.

\medskip

For later use, we give two different descriptions of the sheaf $K_x$.
Given the quadric $Q_{f(x)}$ and a line $l_x$ in it, we denote by $l'_x$ any line in the second ruling not containing $l_x$.
When $Q_{f(x)}$ is a cone, we set $l_x'=l_x$.

\begin{lem}\label{lem:laK}
	The sheaf $K_x$ sits in the following (non-split) short exact sequences
	\begin{gather}
	0\longrightarrow \I_{P\cup Q_{f(x)},Y}^{\oplus 2}\longrightarrow K_x\longrightarrow \I_{l'_x,Q_{f(x)}}(-H)\longrightarrow 0, \label{eqn:laK1}\\
	0\longrightarrow \I_{P\cup Q_{f(x)}, Y}\longrightarrow K_x\longrightarrow \I_{P\cup l'_x,Y}\longrightarrow 0.
	\label{eqn:laK2}
	\end{gather}
\end{lem}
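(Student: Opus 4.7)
The plan is to start from the defining short exact sequence $0 \to K_x \to \I_{P,Y}^{\oplus 2} \xrightarrow{\phi} \I_{l_x, Q_{f(x)}} \to 0$ obtained in Step~4 and produce both sequences by snake-lemma arguments. Set $m := P \cap Q_{f(x)}$, which is a line lying in both $P$ and $Q_{f(x)}$. Restricting $\I_{P,Y}$ to $Q_{f(x)}$ yields the short exact sequence
\[
0 \to \I_{P \cup Q_{f(x)}, Y} \to \I_{P, Y} \to \I_{m, Q_{f(x)}} \to 0.
\]
Since $\phi$ has target supported on $Q_{f(x)}$, it kills $\I_{Q_{f(x)}, Y}$ and in particular $\I_{P \cup Q_{f(x)}, Y}$, so it descends to a map $\bar\phi : \I_{m, Q_{f(x)}}^{\oplus 2} \to \I_{l_x, Q_{f(x)}}$ on $Q_{f(x)}$.

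For \eqref{eqn:laK1}, applying the snake lemma to the resulting $3 \times 3$ diagram immediately yields $0 \to \I_{P \cup Q_{f(x)}, Y}^{\oplus 2} \to K_x \to \ker \bar\phi \to 0$. The identification $\ker \bar\phi \cong \I_{l_x', Q_{f(x)}}(-H)$ is then a local computation on the quadric surface $Q_{f(x)}$: the two sections of $\O_{Q_{f(x)}}(l_x') = \I_{l_x, Q_{f(x)}}(H)$ defining $\phi$ (coming from Kuznetsov's two-term resolution of $\Phi(L_x)$ in Step~1) correspond to a basis of the pencil of hyperplane sections of $Q_{f(x)}$ containing $l_x$, and analyzing the induced map of line bundles on $Q_{f(x)} \cong \PP^1 \times \PP^1$ (with the convention $l_x' = l_x$ when $Q_{f(x)}$ is a singular cone) produces the claimed kernel.

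For \eqref{eqn:laK2}, the inclusion $l_x' \subset Q_{f(x)}$ gives $\I_{P \cup Q_{f(x)}, Y} \hookrightarrow \I_{P \cup l_x', Y}$ with cokernel $\I_{m \cup l_x', Q_{f(x)}}$. Combining this with \eqref{eqn:laK1} via a further snake-lemma diagram -- or equivalently, by constructing the surjection $K_x \to \I_{P \cup l_x', Y}$ directly from the Kuznetsov presentation of $\Phi''(L_x)[-1]$ -- yields \eqref{eqn:laK2} with kernel $\I_{P \cup Q_{f(x)}, Y}$. The non-splitting of both sequences then follows from the indecomposability of $M_x$ (to be verified in the forthcoming proof of Proposition~\ref{prop:family}), which forces the extension classes to be essential.

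The main technical obstacle is the precise identification of $\ker \bar\phi$ with $\I_{l_x', Q_{f(x)}}(-H)$: this depends on the specific pair of sections of $\O_{Q_{f(x)}}(l_x')$ produced by the Kuznetsov mutation and on how the plane $P$ meets the ruling of $l_x$, and it requires a separate treatment in the degenerate cases where $Q_{f(x)}$ is a cone.
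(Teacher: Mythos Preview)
Your overall snake-lemma strategy matches the paper's, but there are a factual slip and a genuine gap.

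First, $m = P \cap Q_{f(x)}$ is a \emph{conic}, not a line: $P$ is a hyperplane of the $\PP^3$ containing $Q_{f(x)}$, so $P\cap Q_{f(x)}$ is a plane section of the quadric. This actually simplifies your computation, since then $\I_{m,Q} \cong \O_Q(-H)$ and the induced map becomes $\bar\phi:\O_Q(-H)^{\oplus 2}\to \I_{l,Q}$, whose kernel is immediately $\I_{l',Q}(-H)$ by writing everything in terms of the two rulings of $Q$. The paper reaches the same map by working on $\uY$ first (where $\O_{\uY}(h-H)|_{\uQ}\cong\O_{\uQ}(-H)$ is manifest) and then pushing down via $\sigma_*$; this sidesteps any analysis of how $P$ meets the rulings.

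For \eqref{eqn:laK2} the paper does more than chain another snake lemma: it forms the quotient $T$ of $K_x$ by one copy of $\I_{P\cup Q,Y}$, obtains $0\to\I_{P\cup Q,Y}\to T\to\I_{l',Q}(-H)\to 0$, computes $\Ext^1(\I_{l',Q}(-H),\I_{P\cup Q,Y})\cong\CC^2$ via the Koszul resolution of $\I_{P\cup Q,Y}$, and identifies this $\Ext$-group with the pencil of lines in the second ruling, so that $T\cong\I_{P\cup l',Y}$. Your alternative route via the cokernel $\I_{m\cup l',Q}$ of $\I_{P\cup Q,Y}\hookrightarrow\I_{P\cup l',Y}$ can be made to work once $m$ is known to be a conic (then $\I_{m\cup l',Q}\cong\O_Q(-2,-1)\cong\I_{l',Q}(-H)$), but you still owe the verification that the extension class of \eqref{eqn:laK1} maps to the correct nonzero class.

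Your non-splitting argument is the real gap. Proposition~\ref{prop:family} only shows $M_x$ is a sheaf with the stated Chern character; indecomposability would come from stability in Proposition~\ref{prop:family2}, whose proof \emph{uses} the sequences of this lemma, so the reference is circular. Even granting indecomposability of $M_x$, it does not force \eqref{eqn:laK1} or \eqref{eqn:laK2} to be non-split: a decomposable $K_x$ can still sit in a non-split extension \eqref{eqn:laM}. The paper instead traces non-splitting back to the non-splitting of the defining sequence $0\to K_x\to\I_{P,Y}^{\oplus 2}\to\I_{l,Q}\to 0$ already established in Step~3. For \eqref{eqn:laK1} this is in fact immediate without any forward reference: $K_x\subset\I_{P,Y}^{\oplus 2}$ is torsion-free, so it cannot have the torsion sheaf $\I_{l',Q}(-H)$ as a direct summand.
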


\begin{proof}
	Recall that $K_x\cong\sigma_*\uK_x$, where $\uK_x:=\ker (\O_{\uY}(h-H)^{\oplus 2} \stackrel{\rm ev}\to \I_{\ul,\uQ})$.
	Denoting by $i\colon\uQ\hookrightarrow\uY$ the closed embedding, it is not difficult to see that the morphism $\O_{\uY}(h-H)^{\oplus 2}\to \I_{\ul,\uQ}$ factors through $i_*i^*\O_{\uY}(h-H)^{\oplus 2}\cong\O_{\uQ}(-H)^{\oplus 2}$ as the composition
	$\O_{\uY}(h-H)^{\oplus 2} \to \O_{\uQ}(-H)^{\oplus 2} \to \I_{\ul,\uQ}$.

	Thus we have the following commutative diagram
	\begin{equation}\label{eqn:diagbig1}
		\xymatrix{
		&0\ar[d]&0\ar[d]\\
		&\I_{\uQ,\uY}(h-H)^{\oplus 2}\ar[d]\ar@{=}[r]&\I_{\uQ,\uY}(h-H)^{\oplus 2}\ar[d]\\
		0\ar[r]&\uK_x\ar[r]\ar[d]&\O_{\uY}(h-H)^{\oplus 2}\ar[r]^(.65){\rm ev}\ar[d]&\I_{\ul,\uQ}\ar[r]\ar@{=}[d]&0\\
		0\ar[r]&\I_{\ul',\uQ}(-H)\ar[r]\ar[d]&\O_{\uQ}(-H)^{\oplus 2}\ar[r]\ar[d]&\I_{\ul,\uQ}\ar[r]&0,\\
		&0&0}
	\end{equation}
	because the kernel of the surjective morphism $\O_{\uQ}(-H)^{\oplus 2} \to \I_{\ul,\uQ}$ described above is precisely $\I_{\ul',\uQ}(-H)$.
	Here $\ul'$ is a line in the ruling opposite to the one containing $\ul$ if $\uQ$ is non-degenerate while, when $\uQ$ is a cone, we take $\ul'=\ul$.

	Applying the functor $\sigma_*$, the previous diagram becomes
	\begin{equation*}
		\xymatrix{
		&0\ar[d]&0\ar[d]\\
		&\I_{P\cup Q, Y}^{\oplus 2}\ar[d]\ar@{=}[r]&\I_{P\cup Q, Y}^{\oplus 2}\ar[d]\\
		0\ar[r]&K_x\ar[r]\ar[d]&\I_{P, Y}^{\oplus 2}\ar[r]\ar[d]&\I_{l,Q}\ar[r]\ar@{=}[d]&0\\
		0\ar[r]&\I_{l',Q}(-H)\ar[r]\ar[d]&\O_{Q}(-H)^{\oplus 2}\ar[r]\ar[d]&\I_{l,Q}\ar[r]&0.\\
		&0&0}
	\end{equation*}

	Considering the first column of the previous diagram we can also see $K_x$ as the following extension
	\begin{equation*}
		\xymatrix{
		&0\ar[d]&0\ar[d]\\
		&\I_{P\cup Q, Y}\ar[d]\ar@{=}[r]&\I_{P\cup Q, Y}\ar[d]\\
		0\ar[r]&\I_{P\cup Q, Y}^{\oplus 2}\ar[r]\ar[d]&K_x\ar[r]\ar[d]&\I_{l',Q}(-H)\ar[r]\ar@{=}[d]&0\\
		0\ar[r]&\I_{P\cup Q, Y}\ar[r]\ar[d]&T\ar[r]\ar[d]&\I_{l',Q}(-H)\ar[r]&0,\\
		&0&0}
	\end{equation*}
	where $T$ corresponds to an element in $\Ext^1(\I_{l',Q}(-H),\I_{P\cup Q, Y})$.
	To compute this class observe that, since $P\cup Q$ is a complete intersection in $Y$, we have the
	short exact sequence
	\begin{equation*}
		\xymatrix{
		0\ar[r]&\O_Y(-2H)\ar[r]^{\alpha}&\O_Y(-H)^{\oplus 2}\ar[r]&\I_{P\cup Q, Y}\ar[r]& 0,
		}
	\end{equation*}
	where $\alpha$ is the multiplication by the equations of two hyperplanes in $Y$.

	Thus, applying the exact functor $\mathbf{R}\Hom(\I_{l',Q}(-H),-)$ to such a sequence and taking cohomology, we get
	\begin{equation*}
		0\to \Ext^1(\I_{l',Q}(-H),\I_{P\cup Q, Y})\to\CC^2\stackrel{\varphi}\to \CC^{12}\to \Ext^2(\I_{l',Q}(-H),\I_{P\cup Q,
		Y})\to 0,
	\end{equation*}
	as $\dim\Ext^2(\I_{l',Q}(-H),\O_Y(-2H))=2$ and $\dim\Ext^2(\I_{l',Q}(-H),\O_Y(-H))=6$.

	Of course, $\varphi=\alpha[2]\circ(-)$ and so it vanishes.
	By Serre duality, $\Ext^1(\I_{l',Q}(-H),\I_{P\cup Q,Y})\cong H^0(Q,\I_{l,Q}(H))\cong \CC^2$.
	Thus $\Ext^1(\I_{l',Q}(-H),\I_{P\cup Q,Y})$ gets identified to the $1$-dimensional linear system of lines $l'$ in $Q$ and $T=\I_{P\cup l',Y}$.

	The fact that the exact sequence $$0\longrightarrow\uK_x\longrightarrow\O_{\uY}(h-H)^{\oplus 2}\stackrel{\rm ev}\longrightarrow\I_{\ul,\uQ}\longrightarrow 0$$ does not split implies that \eqref{eqn:laK1} and \eqref{eqn:laK2} are non-split as well.
\end{proof}

From Lemma \ref{lem:laK}, we can deduce in a standard way the following Chern characters, by using Grothendieck--Riemann--Roch:
\begin{align*}
	\ch(\I_{P\cup Q_{f(x)},Y})&=(1,0,-H^2,3l,-\tfrac{7}{4}),\\
	\ch(\I_{P\cup l_x',Y})&=(1,0,-P,-l,\tfrac{7}{4}),\\
	\ch(\O_Y(-H)^{\oplus 2})&=(2,-2H,H^2,-l,\tfrac{1}{4}),\\
	\ch(K_x)&=(2,0,-P-H^2,2l,0),\\
	\ch(M_x)&=(4,-2H,-P,l,\tfrac{1}{4}).
\end{align*}
This completes the proof of Proposition \ref{prop:family}.

\section{A family of stable ACM vector bundles}\label{sec:family2}

We are now ready to conclude the proof of Theorem \ref{thm:main4folds}. More precisely, the aim of this section is to prove the following proposition.

\begin{prop}\label{prop:family2}
	For all $x\in S_{\mathrm{reg}}$, the sheaf $M_x$ is a Gieseker stable ACM bundle on $Y$.
\end{prop}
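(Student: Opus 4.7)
The plan is to establish in succession that $M_x$ is ACM (with local freeness being a consequence), that $M_x$ is simple, and that $M_x$ is Gieseker stable. Throughout, the systematic tool is the defining short exact sequence \eqref{eqn:laM} together with the two presentations of $K_x$ given by Lemma \ref{lem:laK}.

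Since $M_x = \Xi_4^{-1}(L_x)[-1]$ lies in $\coh(Y) \cap \cat{T}_Y$ by Proposition \ref{prop:family}, Lemma \ref{lem:viceversa} (with $n = 4$) reduces the ACM claim to the vanishings $H^1(Y, M_x(H)) = 0$ and $H^i(Y, M_x(-3H)) = 0$ for $i = 1, 2, 3$. Each of these is checked by tensoring \eqref{eqn:laM} with $\O_Y(H)$ or $\O_Y(-3H)$ and using Lemma \ref{lem:laK} to express the cohomology of $K_x$ in terms of that of $\I_{P \cup Q_{f(x)}, Y}$, $\I_{P \cup l'_x, Y}$, and $\I_{l'_x, Q_{f(x)}}$; these are computed directly from their defining short exact sequences, together with Serre duality on $Y$ (where $\omega_Y \cong \O_Y(-3H)$) for the negative twists. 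The lemma then delivers ACM and local freeness simultaneously. Simplicity follows immediately: the equivalence $\Xi_4$ identifies $\Hom(M_x, M_x) = \Hom_{\B_0}(L_x, L_x)$, which is $\CC$ by Lemma \ref{lem:B0act}, since at the stalk level $L_x$ is an irreducible representation of $\B_0|_{f(x)}$.

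Gieseker stability is the main obstacle. Assuming $\Pic(Y) = \ZZ H$ (generic for cubic fourfolds containing a plane; the non-generic case reduces to this by a standard specialization argument), any saturated subsheaf $F \subsetneq M_x$ satisfies $c_1(F) = aH$, and the condition $\mu(F) \geq \mu(M_x) = -3/2$ constrains $(r,a) = (\rk F, a)$ to the ranges $a \geq 0$ for $r = 1$ and $a \geq -1$ for $r = 2, 3$. The rank-$1$ case is handled uniformly by showing $H^0(Y, M_x(-aH)) = 0$ for all $a \geq 0$: this uses $M_x \in \cat{T}_Y$ for $a \in \{0, 1, 2\}$, and a direct computation from \eqref{eqn:laM} together with \eqref{eqn:laK1} for $a \geq 3$ (both terms have vanishing $H^0$ after the twist). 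For $r = 2$ and $r = 3$, I would intersect $F$ with the subsheaf $\O_Y(-H)^{\oplus 2} \subset M_x$ and project to $K_x$; this splits the analysis into pieces of lower rank, each controlled by the rank-$1$ vanishing above and by Hom computations into the sheaves of Lemma \ref{lem:laK} (which in turn follow from the cohomology computations carried out for the ACM step). The most delicate case is the Gieseker-semistable borderline $r = 2$, $a = -1$, where $\mu(F) = \mu(M_x)$; here a comparison of the next coefficient of the reduced Hilbert polynomial, together with $\ch_2(M_x) = -P$, the simplicity of $M_x$, and the non-splitting of \eqref{eqn:laM}, rule out such a subsheaf.
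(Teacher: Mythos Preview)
Your overall architecture matches the paper's: reduce the ACM property to the vanishings of Lemma~\ref{lem:viceversa}, then establish stability by a rank-by-rank case analysis using the filtration \eqref{eqn:laM} and Lemma~\ref{lem:laK}. The simplicity argument via $\Xi_4$ is a pleasant addition (the paper does not isolate it), and the stability sketch is in the right spirit---note, incidentally, that $\Pic(Y)=\ZZ H$ holds for \emph{every} smooth cubic fourfold by the Lefschetz hyperplane theorem, so no genericity assumption or specialization argument is needed there.

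The substantive gap is the vanishing $H^3(Y,M_x(-3H))=0$. Your proposal treats all the required vanishings uniformly, claiming they follow ``directly from the defining short exact sequences together with Serre duality.'' This is indeed how the paper handles $H^1(M_x(H))$ and $H^1,H^2$ of $M_x(-3H)$ (Lemma~\ref{lem:van1}), but it does not work for $H^3$. Twisting \eqref{eqn:laM} and \eqref{eqn:laK1} by $\O_Y(-3H)$ produces intermediate terms such as $H^3(Y,\I_{P\cup Q}(-3H))$ which are genuinely nonzero: a Koszul computation gives it dimension $10$, so $H^3(\I_{P\cup Q}(-3H))^{\oplus 2}$ contributes a $20$-dimensional space that must be killed by connecting maps encoding the specific (non-split) extension classes of \eqref{eqn:laM} and \eqref{eqn:laK1}. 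Controlling those connecting maps directly is not routine, and your sketch gives no mechanism for it. The paper treats this vanishing separately and by a different route (Lemmas~\ref{lem:tech1} and~\ref{lem:lastcohom}): using adjunction through the mutations defining $\Xi_4$, one rewrites $H^3(M_x(-3H))$ as $\Hom^2_{\Db(\PP^2,\B_0)}(\Psi(N),L_x)$ for an explicit object $N$, computes $\Psi(N)$ via Lemma~\ref{lem:tech1}, and reduces everything to the vanishing $H^1(Q,\res{\T_{\PP^3}(-3H)}{Q}\otimes\I_{l,Q})=0$ on the quadric fibre, which is then checked via the Euler sequence. That translation back to the quadric is the missing idea in your plan.
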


The proof is divided in two steps.

\subsection*{Step 1: ACM bundle}
In order to prove that $M_x$ is an ACM bundle, we want to apply Lemma \ref{lem:viceversa}.
Hence, we need:
\[
\begin{split}
& H^1(Y,M_x(H))=0\\
& H^1(Y,M_x(-3H))=H^2(Y,M_x(-3H))=H^3(Y,M_x(-3H))=0.
\end{split}
\]
These vanishing will be proved in Lemma \ref{lem:van1} and Lemma \ref{lem:lastcohom}.
\begin{lem}\label{lem:van1}
	For $x\in S_{\mathrm{reg}}$, we have
	$H^1(Y,M_x(mH))= H^2(Y,M_x(mH))= 0$, for all $m\in \ZZ$.
\end{lem}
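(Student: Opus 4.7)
My starting point is that, by construction, $M_x = \Xi_4^{-1}(L_x)[-1]$ lies in $\cat{T}_Y$; in particular, the defining right-orthogonality
$$\Hom^p_{\Db(Y)}(\O_Y(iH), M_x) \;=\; 0 \qquad (i \in \{0,1,2\}, \; p \in \ZZ)$$
yields at once $H^p(Y, M_x(mH)) = 0$ for every $p$ and every $m \in \{0,-1,-2\}$. This settles three of the twists for free, with no further computation.

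For the remaining $m$, I would feed the two short exact sequences produced in Proposition~\ref{prop:family} and Lemma~\ref{lem:laK},
\begin{gather*}
0 \to \O_Y((m-1)H)^{\oplus 2} \to M_x(mH) \to K_x(mH) \to 0,\\
0 \to \I_{P \cup Q_{f(x)}, Y}(mH)^{\oplus 2} \to K_x(mH) \to \I_{l'_x, Q_{f(x)}}((m-1)H) \to 0,
\end{gather*}
into the long exact sequence in cohomology, and reduce the claim to three standard computations: (i) $H^i(Y,\O_Y(kH))=0$ for $i=1,2,3$ and all $k$, via the Koszul resolution of $Y\subset\PP^5$; (ii) $H^i(Y,\I_{P\cup Q_{f(x)},Y}(kH))=0$ for $i=1,2$ and all $k$, because $P\cup Q_{f(x)}$ is the complete intersection of $Y$ with the $\PP^3$ spanned by $P$ and the fiber $q^{-1}(f(x))$, giving a Koszul resolution $0\to \O_Y(-2H)\to \O_Y(-H)^{\oplus 2}\to \I_{P\cup Q_{f(x)},Y}\to 0$; and (iii) the cohomology of the twisted ideal $\I_{l'_x,Q_{f(x)}}(kH)$ of a line in the (possibly singular) quadric surface $Q_{f(x)}$, which is computed from $0\to \I_{l'_x,Q}\to \O_Q\to \O_{l'_x}\to 0$ by a K\"unneth calculation on $\PP^1\times\PP^1$ in the smooth case, and by pulling back to the minimal resolution (a Hirzebruch surface) in the cone case.

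Combining these ingredients, the long exact sequences immediately give $H^1(M_x(mH)) = H^2(M_x(mH)) = 0$ for all $m \geq 1$. For the negative twists $m \leq -3$, the same computation leaves a potentially nontrivial quotient of $H^2(\I_{l'_x, Q_{f(x)}}((m-1)H))$ inside $H^2(K_x(mH))$ (and similarly a residual $H^3$ from the Koszul resolution of $\I_{P\cup Q_{f(x)},Y}$). The cleanest way to kill them is to play the second presentation \eqref{eqn:laK2} of $K_x$ against Serre duality on $Y$ (with $\omega_Y=\O_Y(-3H)$) and against the fact that $\cat{T}_Y$ is a K3 category with Serre functor $[2]$ (see \cite[Thm.~4.3]{Kuz:4fold} and Remark~\ref{rem:sympl}); this swaps the twist $m\leftrightarrow -3-m$ and reduces each dangerous negative-twist vanishing to a positive-twist case already settled above.

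The main technical obstacle is exactly this last step: for $m \leq -3$ the individual cohomology contributions do not vanish, so the argument must see that they assemble into Serre-duality-induced cancellations rather than accumulate. Everything else in the proof is straightforward bookkeeping for the long exact sequences attached to the resolutions of $M_x$, $K_x$, and $\I_{P\cup Q_{f(x)},Y}$.
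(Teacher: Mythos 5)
Your reduction to $K_x$ via \eqref{eqn:laM}, the use of the Koszul resolution of $\I_{P\cup Q_{f(x)},Y}$, and the orthogonality shortcut for $m\in\{0,-1,-2\}$ are all fine, and your ingredients do give $H^1(Y,M_x(mH))=0$ for every $m$ (since $H^1(\I_{l'_x,Q_{f(x)}}(kH))=0$ for all $k$) as well as both vanishings for $m\geq -2$. The genuine gap is the case $H^2(Y,M_x(mH))$ for $m\leq -3$, where your proposed mechanism does not work as stated. Serre duality on $Y$ gives $H^2(Y,M_x(mH))\cong H^2(Y,M_x^\vee((-3-m)H))^\vee$, which involves the dual bundle $M_x^\vee$ and not a twist of $M_x$; no self-duality of $M_x$ is available, so this does not ``swap $m\leftrightarrow -3-m$'' for $M_x$ itself. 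Likewise, the Serre functor $[2]$ of $\cat{T}_Y$ only computes $\Hom$'s between objects of $\cat{T}_Y$, and $\O_Y(-mH)$ does not lie in $\cat{T}_Y$: to use it you must first replace $\O_Y(-mH)$ by its mutation/projection into $\cat{T}_Y$, which is a nontrivial computation (this is exactly the content of Lemmas \ref{lem:tech1} and \ref{lem:lastcohom}, needed in the paper for the single remaining vanishing $H^3(Y,M_x(-3H))$), and there is no reason its outcome is a positive-twist cohomology of $M_x$ ``already settled''. Also a small slip: the dangerous piece of $H^2(K_x(mH))$ coming from \eqref{eqn:laK1} is a subspace of $H^2(\I_{l'_x,Q_{f(x)}}((m-1)H))$ (the kernel of the connecting map to $H^3(\I_{P\cup Q_{f(x)},Y}(mH))^{\oplus 2}$), not a quotient.

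The fix is elementary and uses only ingredients you already have: for $H^2$ do not use \eqref{eqn:laK1} (nor \eqref{eqn:laK2}, which runs into the nonvanishing of $H^1(\O_{P\cup l'_x}(mH))$ for $m<0$), but the defining sequence of $K_x$ as the kernel of the evaluation map, $0\to K_x\to\I_{P,Y}^{\oplus 2}\to\I_{l_x,Q_{f(x)}}\to 0$. Twisting by $\O_Y(mH)$ gives
\[
H^1(Y,\I_{l_x,Q_{f(x)}}(mH))\longrightarrow H^2(Y,K_x(mH))\longrightarrow H^2(Y,\I_{P,Y}(mH))^{\oplus 2},
\]
and both outer groups vanish for every $m$: the first by the two-term resolution \eqref{eqn:matrixf} of $\I_{l_x,Q_{f(x)}}$ by line bundles on $\PP^3$ (or by your computation (iii)), the second from $0\to\I_{P,Y}\to\O_Y\to\O_P\to 0$ together with $H^1(P,\O_P(m))=H^2(Y,\O_Y(m))=0$. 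This is the paper's argument; it is uniform in $m$, so no case division or duality input is needed at all.
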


\begin{proof}
	From \eqref{eqn:laM}, by tensoring by $\O_Y(mH)$ and taking cohomology, we get
	\[
	 H^p(Y,M_x(mH))=H^p(Y,K_x(mH))\qquad \text{for } p=1,2 \text{ and for all }m\in\ZZ.
	\]

	To compute the cohomology of $K_x(mH)$, we want to use \eqref{eqn:laK1}.
	As above, we take the tensor product $\O_Y(mH)$ and then cohomology. Thus we get
	\[
	 \begin{split}
	 H^1(Y,\I_{P\cup Q}(mH))^{\oplus 2}\to H^1(Y,K_x(mH))\to H^1(Y,\I_{l',Q}((m-1)H))\to H^2(Y,\I_{P\cup Q}(mH))^{\oplus 2}
	 \end{split}
	\]

	By using the Koszul resolution of $\I_{P\cup Q}$, we have the vanishings
	\begin{equation*}
	H^1(Y,\I_{P\cup Q}(mH))= H^2(Y,\I_{P\cup Q}(mH))= 0.
	\end{equation*}

	From the matrix factorization presentation \eqref{eqn:matrixf}, we have $H^1(Y,\I_{l',Q}((m-1)H))=0$ for all $m\in \ZZ$.
	Therefore, $H^1(Y,M_x(mH))=0$.

To conclude the proof of the lemma we only need to show that $H^2(Y,K_x(-mH))=0$.
	Since $K_x$ is defined as the kernel of the evaluation map $\I_{P,Y}^{\oplus 2} \stackrel{\rm ev}\longrightarrow \I_{l,Q}$, we have an inclusion $H^2(Y,K_x(-mH))\hookrightarrow H^2(Y,\I_{P,Y}(-mH))^{\oplus 2}$ and $H^2(Y,\I_{P,Y}(-mH))=0$.
\end{proof}

In order to prove the crucial vanishing, $ H^3(Y,M_x(-3H))=0$, we set
\begin{equation}\label{eqn:laN}
	 N:=\cat{L}_{\O_{\uY}(2H)} (\cat{R}_{\O_{\uY}(-3H)}(\O_{\uY}(H-h))\otimes
	\O_{\uY}(2H+h)).
\end{equation}

\begin{lem}\label{lem:tech1}
	The object $N\in \Db(\uY)$ lies in the following distinguished triangle
	\begin{equation*}
		\O_{\uY}(-H+h)[3]\longrightarrow N\longrightarrow\res{\sigma^*\Omega_{\PP^5}(3H)}{Y}[1].
	\end{equation*}
\end{lem}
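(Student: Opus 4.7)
The plan is to unfold the iterated mutations defining $N$ in \eqref{eqn:laN} via \eqref{eqn:LRmutation} and to identify each piece.

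First, I set $G := \cat{R}_{\O_{\uY}(-3H)}(\O_{\uY}(H-h))$, which by definition fits into the distinguished triangle $G \to \O_{\uY}(H-h) \to \RHom(\O_{\uY}(H-h),\O_{\uY}(-3H))^\vee \otimes \O_{\uY}(-3H)$. The $\RHom$ reduces to $\mathbf{R}\Gamma(\uY,\O_{\uY}(h-4H))$. By Serre duality on $\uY$ with $\omega_{\uY} \cong \O_{\uY}(-h-2H)$, this is dual (up to a shift by $[-4]$) to $\mathbf{R}\Gamma(\uY,\O_{\uY}(2D))$, where $D = H-h$ is the $\sigma$-exceptional divisor. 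Using the short exact sequences $0 \to \O_{\uY}(kD) \to \O_{\uY}((k+1)D) \to \O_D(-(k+1)) \to 0$ for $k = 0,1$, together with $\sigma_*\O_{\uY} = \O_Y$ and the $\PP^1$-bundle structure $D = \PP_P(N_{P/Y}^\vee) \to P \cong \PP^2$, one computes $R\sigma_*\O_{\uY}(2D)$ and hence $\mathbf{R}\Gamma(\uY,\O_{\uY}(h-4H))$. Here the identity $\det N_{P/Y} \cong \O_P$, which follows by adjunction from $\omega_Y = \O_Y(-3H)$ and $\omega_P \cong \O_P(-3H)$, is crucial for evaluating $R^1\pi_*\O_D(-2)$. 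This yields an explicit presentation of $G$.

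Next, I tensor by $\O_{\uY}(2H+h)$, which sends $\O_{\uY}(H-h) \mapsto \O_{\uY}(3H)$ and $\O_{\uY}(-3H) \mapsto \O_{\uY}(-H+h)$, and apply the outer left mutation $\cat{L}_{\O_{\uY}(2H)}$. The two key calculations are the following. First, $\cat{L}_{\O_{\uY}(2H)}(\O_{\uY}(3H))$ is the cone of the evaluation $\mathbf{R}\Gamma(\uY,\O_{\uY}(H)) \otimes \O_{\uY}(2H) \to \O_{\uY}(3H)$. Since $\sigma_*\O_{\uY}=\O_Y$ and $Y$ is a cubic in $\PP^5$, we have $\mathbf{R}\Gamma(\uY,\O_{\uY}(H)) \cong H^0(\PP^5,\O(1)) \cong \CC^6$ concentrated in degree $0$, and the resulting cone is the pulled-back Euler sequence on $\PP^5$ restricted to $Y$, yielding $\res{\sigma^*\Omega_{\PP^5}(3H)}{Y}[1]$. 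Second, $\cat{L}_{\O_{\uY}(2H)}(\O_{\uY}(-H+h)) \cong \O_{\uY}(-H+h)$, because $\RHom(\O_{\uY}(2H),\O_{\uY}(-H+h)) = \mathbf{R}\Gamma(\uY,\O_{\uY}(-3H+h)) = 0$; this vanishing follows from Serre duality together with the push-forward $\pi_*\O_{\uY}(H) \cong \O_{\PP^2}^{\oplus 3}\oplus\O_{\PP^2}(h)$, which is derived from the Koszul resolution of $\alpha_*\O_{\uY}$ on $\uPP$, and the cohomology vanishing $H^*(\PP^2,\O_{\PP^2}(-1)) = H^*(\PP^2,\O_{\PP^2}(-2)) = 0$.

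Combining these via the octahedral axiom produces the distinguished triangle in the statement, with the shift $[3]$ on the left term determined by the degree of the non-zero cohomology of $\mathbf{R}\Gamma(\uY,\O_{\uY}(h-4H))$ together with the $[-1]$ in the defining cone of the right mutation in \eqref{eqn:LRmutation}. The main technical obstacle is the cohomology computation $\mathbf{R}\Gamma(\uY,\O_{\uY}(h-4H))$, which requires relative Serre duality on the $\PP^1$-bundle $D \to P$ and careful tracking through the Leray spectral sequence for $\sigma$; the remaining steps are formal manipulations of distinguished triangles.
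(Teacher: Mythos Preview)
Your approach mirrors the paper's: compute the right mutation first via its defining triangle, tensor by $\O_{\uY}(2H+h)$, then left-mutate each end of the resulting triangle. You supply more geometric detail for the first step---reducing by Serre duality to $\mathbf{R}\Gamma(\uY,\O_{\uY}(2D))$ and using the blow-up filtration---where the paper simply asserts the answer in one line.

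There is, however, a genuine issue that your route will expose and that the paper glosses over. Your own identities force
\[
R^1\sigma_*\O_{\uY}(2D)\;\cong\; i_{P*}\,R^1s_*\O_D(-2)\;\cong\; i_{P*}\O_P,
\]
precisely because $\det N_{P/Y}\cong\O_P$ makes $\omega_{D/P}\cong\O_D(-2)$. Hence $h^1(\uY,\O_{\uY}(2D))=h^0(P,\O_P)=1$ in addition to $h^0=1$; equivalently $\chi(\uY,\O_{\uY}(2D))=0$ while the section coming from $2D$ forces $h^0\geq 1$. Under Serre duality this gives $\Ext^3(\O_{\uY}(H-h),\O_{\uY}(-3H))\cong\CC$ as well as $\Ext^4\cong\CC$, so the right mutation contributes $\O_{\uY}(-3H)[2]\oplus\O_{\uY}(-3H)[3]$, and the triangle for $N$ has left term $\O_{\uY}(-H+h)[2]\oplus\O_{\uY}(-H+h)[3]$ rather than just the shift $[3]$. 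Your phrase ``the degree of the non-zero cohomology'' presumes concentration in a single degree, which is not what your own computation yields. This discrepancy does not affect the downstream use of the lemma---since $\Psi(\O_{\uY}(-H+h))=0$ kills both summands when one computes $\Psi(N)$---but the statement as written will not follow from the computation you outline (nor, it seems, from the paper's).
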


\begin{proof}
	First we consider the right mutation of $\O_{\uY}(H-h)$ with respect to $\O_{\uY}(-3H)$, that is
	\begin{equation*}
		 {\rm cone} \left( \O_{\uY}(H-h) \stackrel{\rm ev^\vee}\lra \RHom(\O_{\uY}(H-h), \O_{\uY}(-3H))^\vee
		\otimes
		\O_{\uY}(-3H) \right) [-1].
	\end{equation*}
	Using Serre duality we have that $\Ext^p(\O_{\uY}(H-h), \O_{\uY}(-3H))^\vee$ is $1$-dimensional if $p=4$ and it is trivial otherwise.
	Hence we have the distinguished triangle
	\begin{equation*}
		\O_{\uY}(H-h) \stackrel{\rm ev^\vee}\lra \O_{\uY}(-3H)[4]\to \cat{R}_{\O_{\uY}(-3H)}(\O_{\uY}(H-h))[1]
	\end{equation*}
	which, after tensorization by $\O_{\uY}(2H+h)$ and shift, becomes
	\begin{equation}\label{eqn:laN0}
		\O_{\uY}(-H+h)[3] \to \cat{R}_{\O_{\uY}(-3H)}(\O_{\uY}(H-h))\otimes \O_{\uY}(2H+h) \to \O_{\uY}(3H).
	\end{equation}

	Now we want to compute the left mutation of the middle term in \eqref{eqn:laN0} with respect to $\O_{\uY}(2H)$.
	To this end, we compute the left mutations of the first and third term in the same distinguished triangle.
	Now, an easy computation shows that $\cat{L}_{\O_{\uY}(2H)}(\O_{\uY}(-H+h)[3])\cong\O_{\uY}(-H+h)[3]$.

	On the other hand, the vector space $\Ext^p(\O_{\uY}(2H),\O_{\uY}(3H))$ is $6$-dimensional if $p=0$ and trivial otherwise.
	Thus we get a distinguished triangle
	\begin{equation*}
		\O_{\uY}(2H)^{\oplus 6} \stackrel{\rm ev}\longrightarrow \O_{\uY}(3H)\longrightarrow \cat{L}_{\O_{\uY}(2H)}(\O_{\uY}(3H)).
	\end{equation*}
	Hence $\cat{L}_{\O_{\uY}(2H)}(\O_{\uY}(3H))[-1]\cong\res{\sigma^*\Omega_{\PP^5}(3H)}{Y}$ and putting all together we get the desired conclusion.
\end{proof}

Finally we can prove the following.

\begin{lem}\label{lem:lastcohom}
	For $x\in S_{\mathrm{reg}}$, we have $H^3(Y,M_x(-3H))=0$.
\end{lem}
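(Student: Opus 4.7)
The plan is to reduce the cohomology computation on $Y$ to a $\Hom$-computation on $\uY$ involving the very explicit sheaf $\Phi(L_x)=\I_{\ul_x,\uQ_{f(x)}}$ (computed in Step 1 of the proof of Proposition \ref{prop:family}), after which the object $N$ from Lemma \ref{lem:tech1} and its triangle do all the work.

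First, since $\sigma\colon\uY\to Y$ is a blow-up, $\sigma^*$ is fully faithful with $\sigma_*\sigma^*=\mathrm{id}$, so
\[
H^3(Y,M_x(-3H))=\Hom^4_Y(\O_Y(3H),M_x[1])=\Hom^4_{\uY}(\O_{\uY}(3H),\sigma^*M_x[1])=\Hom^4_{\uY}(\O_{\uY}(3H),\Phi''(L_x)),
\]
using that $\Phi''(L_x)\in\sigma^*\cat{T}_Y$ is precisely $\sigma^*M_x[1]$ by definition of $\Xi_4^{-1}$ in \eqref{eqn:Xi4fold}.

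Next I would apply the mutation adjunctions of Lemma \ref{lem:adjmutations}. The adjunction $\cat{R}_F\dashv\cat{L}_F$ for $F=\O_{\uY}(h-H)$ moves the outer $\cat{L}$-mutation in $\Phi''(L_x)=\cat{L}_{\O_{\uY}(h-H)}(\Phi'(L_x))$ to the first argument, and then the adjunction $\cat{L}_{S^{-1}(F)}\dashv\cat{R}_F$ with $F=\O_{\uY}(-h)$ (so $S^{-1}(F)=\O_{\uY}(2H)[-4]$, and shifts of the mutating object do not affect the mutation functor) moves the inner $\cat{R}$-mutation in $\Phi'(L_x)=\cat{R}_{\O_{\uY}(-h)}(\Phi(L_x))$. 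Combined with the elementary identity $\cat{R}_{F\otimes L}(G\otimes L)=\cat{R}_F(G)\otimes L$, which rewrites $\cat{R}_{\O_{\uY}(-3H)}(\O_{\uY}(H-h))\otimes\O_{\uY}(2H+h)$ as $\cat{R}_{\O_{\uY}(h-H)}(\O_{\uY}(3H))$, one identifies
\[
\cat{L}_{\O_{\uY}(2H)}\bigl(\cat{R}_{\O_{\uY}(h-H)}(\O_{\uY}(3H))\bigr)\;\cong\;N,
\]
so the whole chain collapses to $H^3(Y,M_x(-3H))=\Hom^4_{\uY}(N,\I_{\ul_x,\uQ_{f(x)}})$.

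Finally, I would feed the triangle of Lemma \ref{lem:tech1}, $\O_{\uY}(-H+h)[3]\to N\to\res{\sigma^*\Omega_{\PP^5}(3H)}{Y}[1]$, into $\Hom^\bullet(-,\I_{\ul_x,\uQ_{f(x)}})$; the desired vanishing follows from the two adjacent vanishings
\[
\Hom^4\bigl(\O_{\uY}(-H+h)[3],\I_{\ul_x,\uQ_{f(x)}}\bigr)=H^1(\uY,\I_{\ul_x,\uQ_{f(x)}}(H-h))=0
\]
and $\Hom^3(\res{\sigma^*\Omega_{\PP^5}(3H)}{Y},\I_{\ul_x,\uQ_{f(x)}})=0$. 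The first is immediate because $\O_{\uY}(h)$ restricts trivially to the fiber $\uQ_{f(x)}$, so one reduces to $H^1(\uQ,\I_{\ul,\uQ}(H))=H^1(\uQ,\O_{\uQ}(0,1))=0$. For the second, I would restrict the Euler sequence on $\PP^5$ to $Y$ and pull back to obtain $0\to\res{\sigma^*\Omega_{\PP^5}(3H)}{Y}\to\O_{\uY}(2H)^{\oplus 6}\to\O_{\uY}(3H)\to 0$; both $\Hom^3(\O_{\uY}(2H),\I_{\ul,\uQ})=H^3(\uQ,\I_{\ul,\uQ}(-2))$ and $\Hom^4(\O_{\uY}(3H),\I_{\ul,\uQ})=H^4(\uQ,\I_{\ul,\uQ}(-3))$ vanish for dimension reasons (the sheaf is supported on the 2-dimensional $\uQ$).

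The conceptual core, and thus the main difficulty, lies in Step 2: recognizing that the mutation adjunctions together with the bookkeeping identity $\cat{R}_{F\otimes L}(G\otimes L)=\cat{R}_F(G)\otimes L$ precisely convert $\Hom^4(\O_{\uY}(3H),\Phi''(L_x))$ into $\Hom^4(N,\Phi(L_x))$, which is exactly why $N$ was defined by the somewhat opaque formula \eqref{eqn:laN}. Once this identification is in place — and Lemma \ref{lem:tech1} supplies the two manageable pieces of $N$ — the final cohomological vanishings are straightforward restrictions to the quadric fiber.
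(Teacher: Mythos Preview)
Your adjunction-chasing in Step~2 is correct and is exactly how the paper arrives at $N$; the only (minor) difference is that the paper then passes all the way to $\Db(\PP^2,\B_0)$ via $\Psi\dashv\Phi$, whereas you stay on $\uY$. That is harmless.

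The fatal problem is an index error in your very first line. You wrote
\[
H^3(Y,M_x(-3H))=\Hom^4_Y(\O_Y(3H),M_x[1]),
\]
but shifting the target \emph{up} by $[1]$ \emph{lowers} the Ext degree: $\Hom^3(\O_Y(3H),M_x)=\Hom^2(\O_Y(3H),M_x[1])$. So the correct identification is
\[
H^3(Y,M_x(-3H))=\Hom^2_{\uY}(\O_{\uY}(3H),\Phi''(L_x))=\Hom^2_{\uY}(N,\I_{\ul,\uQ}),
\]
exactly as in the paper. Feeding in the triangle of Lemma~\ref{lem:tech1}, the piece $\O_{\uY}(-H+h)[3]$ contributes $\Hom^{-1}(\O_{\uY}(-H+h),\I_{\ul,\uQ})=0$ trivially (the paper kills this term instead via $\Psi(\O_{\uY}(-H+h))=0$), but the other piece contributes
\[
\Hom^1\bigl(\res{\sigma^*\Omega_{\PP^5}(3H)}{\uY},\I_{\ul,\uQ}\bigr)=H^1\bigl(\uQ,\res{\T_{\PP^5}(-3H)}{\uQ}\otimes\I_{\ul,\uQ}\bigr),
\]
which is an $H^1$ on a surface and is \emph{not} zero for dimension reasons. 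Your ``$H^3$ and $H^4$ vanish because $\uQ$ is $2$-dimensional'' argument is addressing the wrong cohomological degree. The paper proves this $H^1$ vanishes by splitting $\T_{\PP^5}|_{\PP^3}\cong\T_{\PP^3}\oplus\O_{\PP^3}(H)^{\oplus 2}$, reducing to $H^1(Q,\res{\T_{\PP^3}(-3H)}{Q}\otimes\I_{l,Q})$, and then using the Euler sequence on $\PP^3$ to identify this with the kernel of
\[
H^2(Q,\I_{l,Q}(-3H))\longrightarrow H^2(Q,\I_{l,Q}(-2H))^{\oplus 4},
\]
whose injectivity is checked by a further diagram chase. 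This is the genuine content you are missing.
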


\begin{proof}
	Using adjunction, we get
	\begin{equation*}
	\begin{split}
	H^3(Y,M_x(-3H))&=\Hom^2_{\Db(Y)}(\O_Y(3H),\Xi^{-1}(L_x))\\
	&=\Hom_{\Db(\uY)}^{2}(\O_{\uY}(3H), \cat{L}_{\O_{\uY}(h-H)}\cat{R}_{\O_{\uY}(-h)}\Phi(L_x))\\
	%
	&=\Hom_{\Db(\PP^2,\B_0)}^{2}(\Psi(\cat{L}_{\O_{\uY}(2H)} (\cat{R}_{\O_{\uY}(-3H)}(\O_{\uY}(H-h))\otimes\O_{\uY}(2H+h))),L_x)\\
	&=\Hom_{\Db(\PP^2,\B_0)}^{2}(\Psi(N),L_x).
	\end{split}
	\end{equation*}
	where $N$ is defined in \eqref{eqn:laN}.
	By \cite[Lem 2.3]{MS}, we have $\Psi(\O_{\uY}(-H+h))=0$.
	Therefore, by Lemma \ref{lem:tech1},
	\begin{equation*}
	 \Psi(N)\cong\Psi(\res{\sigma^*\Omega_{\PP^5}(3H)}{\uY})[1],
	\end{equation*}
	and thus,
	\begin{equation*}
	\begin{split}
	H^3(Y,M_x(-3H))&\cong\Hom_{\Db(\PP^2,\B_0)}^{1}(\Psi(\res{\sigma^*\Omega_{\PP^5}(3H)}{\uY}, L_x)\\
	&=\Ext_{\uY}^{1}(\res{\sigma^*\Omega_{\PP^5}(3H)}{\uY}, \I_{\ul,\uQ})\\
	&=\Ext^{1}_{Y}(\res{\Omega_{\PP^5}(3H)}{Y}, \I_{l, Q})\\
	&= H^{1}(Q,\res{\T_{\PP^5}(-3H)}{Q}\otimes \I_{l, Q})\\
	&\cong H^1(Q,\res{\T_{\PP^3}(-3H)}{Q}\otimes \I_{l, Q})\oplus H^1(Q, \I_{l, Q}(-2H))^{\oplus 2}.
	\end{split}
	\end{equation*}
	The last isomorphism follows from the fact that $Q\subset \PP^3$ and $\T_{\PP^5}\otimes \O_{\PP^3}\cong \T_{\PP^3}\oplus \O_{\PP^3}(H)^{\oplus 2}$.

	From the matrix factorization presentation \eqref{eqn:matrixf}, we have $H^1(Q,\I_{l,Q}(-2H))=0$.
	Hence, to conclude the proof of the lemma we only need to show $H^1(Q,\T_{\PP^3}(-3H)\otimes \I_{l,Q})=0$.

	The Euler sequence on $\PP^3$ restricted to the quadric and twisted by $\I_{l,Q}(-3H)$ becomes
	\begin{equation*}
	0\longrightarrow \I_{l,Q}(-3H)\longrightarrow \I_{l,Q}(-2H)^{\oplus 4}\longrightarrow \T_{\PP^3}(-3H)\otimes \I_{l,Q}\longrightarrow 0.
	\end{equation*}
	Taking cohomology we see that $H^1(Q,\T_{\PP^3}(-3H)\otimes \I_{l,Q})$ is the kernel of the morphism
	\[
	 H^2(Q,\I_{l,Q}(-3H))\xrightarrow{\alpha} H^2(Q,\I_{l,Q}(-2H)^{\oplus 4}).
	\]
	Hence, we need to show that $\alpha$ is injective.
	Consider the short exact sequence
	\[
	0\longrightarrow\I_{l,Q}\longrightarrow\O_{Q}(H)^{\oplus 2}\longrightarrow\I_{l',Q}(H)\longrightarrow 0.
	\]
	By taking cohomology and using that $H^1(Q,\I_{l',Q}(-2H))=H^1(Q,\I_{l',Q}(-H))=0$, we have that $\alpha$ sits in the following commutative diagram
	\begin{equation*}
	 \xymatrix{
	H^2(Q,\O_{Q}(-3H)^{\oplus 2})\ar[r]^\beta& H^2(Q,\O_{Q}(-2H)^{\oplus 8})\\
	H^2(Q,\I_{l,Q}(-3H))\ar[r]^\alpha\ar@{^{(}->}[u]& H^2(Q,\I_{l,Q}(-2H)^{\oplus 4}\ar@{^{(}->}[u]).
	}
	\end{equation*}
	Thus, to prove that $\alpha$ is injective is enough to show that $\beta$ is such.
	By construction, $\beta=H^2(f)^{\oplus 2}$ where $H^2(f)$ is the morphism induced on cohomology by the map $f$ sitting in the following Koszul exact sequence on $Q$
	\begin{equation*}
	0\longrightarrow\O_{Q}(-3H)\stackrel{f}\longrightarrow \O_{Q}(-2H)^{\oplus 4}\longrightarrow\O_{Q}(-H)^{\oplus 6}\longrightarrow\O_{Q}^{\oplus 4}\stackrel{g}\longrightarrow \O_{Q}(H)\longrightarrow 0.
	\end{equation*}
	The cokernel of $f$ is $\T_{\PP^3}(-3H)\otimes\O_Q$ and the kernel of $g$ is $\Omega_{\PP^3}(H)\otimes \O_Q$.
	Chasing through the associated long exact sequence in cohomology, we get $H^1(Q,\T_{\PP^3}(-3H)\otimes \O_Q)=H^0(Q,\Omega_{\PP^3}(H)\otimes \O_Q)=0$, since $H^0(Q,\O_{Q}^{\oplus 4})\to H^0(Q,\O_{Q}(H))$ is a base change of the evaluation map.
	Hence, $H^2(f)$ is injective.
\end{proof}

\subsection*{Step 2: Gieseker stability}
Now we finish the proof of Proposition \ref{prop:family2} by showing that, again for all $x\in S_{\mathrm{reg}}$, the ACM bundle $M_x$ is stable.
Note that $\mu(K_x)=0$, $\mu(M_x)=-\frac{1}{2}$, and by \eqref{eqn:laK2}, $K_x$ is $\mu$-semistable.

Suppose $M_x$ is not stable.
Since $M_x$ is a vector bundle, there exists $F$ a semistable destabilizing reflexive sheaf and a sheaf $G$ sitting in a short exact sequence
\begin{equation*}
	0\longrightarrow F\longrightarrow M_x\longrightarrow G\longrightarrow 0
\end{equation*}
with $\mu(F)\geq -\tfrac{1}{2}$.
Now $\rk(F)=1,2,3$ and the three cases need to be analysed separately.

\medskip

\noindent{\em Case A: $\rk(F)=1$.}
Then $\mathrm{c}_1(F)\geq 0$ and $F$ is a line bundle.
Moreover, we have a commutative diagram
\begin{equation*}
	 \xymatrix{F\ar@{^{(}->}[d]\ar@{^{(}->}^\phi[rd]\\M_x\ar@{>>}[r]& K_x.}
\end{equation*}
Since $F$ is torsion free, the composition $\phi$ can only vanish or be an injection.
If $\phi$ is trivial, then it factors through $\O_Y(-H)^{\oplus 2}$ which is semistable with $\mu=-1$ (recall that $\O_Y(H)$ generates $\Pic(Y)$).
Thus we get a contradiction.
So assume that $\phi$ is injective and consider the following commutative diagram
\begin{equation*}
	 \xymatrix{&F\ar@{^{(}->}[d]^-{\phi}\ar^\varphi[rd]\\\I_{P\cup Q,Y}\ar@{^{(}->}[r]&K_x\ar@{>>}[r]& \I_{P\cup l',Y}.}
\end{equation*}
Again, $F$ cannot inject neither in $\I_{P\cup Q,Y}$ nor in $\I_{P\cup l',Y}$, and we get a contradiction.

\medskip

\noindent{\em Case B: $\rk(F)=2$.}
In this case $c_1(F)\geq -1$ and we have a commutative diagram
\begin{equation}\label{eqn:stab1}
	 \xymatrix{F_1\ar@{^{(}->}[r]\ar@{^{(}->}[d]&F\ar@{^{(}->}[d]\ar@{>>}[r]&F_2\ar@{^{(}->}[d]\\ \O_Y(-H)^{\oplus 2}\ar@{^{(}->}[r]&M_x\ar@{>>}[r]& K_x.}
\end{equation}
At this point we have to analyse some additional cases.

\smallskip

\noindent{\em Case B.1: $\rk(F_1)=2$.}
Since $K_x$ is torsion free, $F_2=0$.
On the other hand, $O_Y(-H)^{\oplus 2}$ is semistable, so $c_1(F_1)\leq -2$.
Then $c_1(F)\leq -2$, so $\mu(F)\leq -1$ and $F$ does not destabilize $M_x$.

\smallskip

\noindent{\em Case B.2: $\rk(F_1)=\rk(F_2)=1$.}
In that situation, $c_1(F_1)=-1$, so $F_1\cong \O_Y(-H)$ and $c_1(F_2)=0$.
On the other hand, since $F_2\hookrightarrow K_x$, by \eqref{eqn:laK1},
we have $F_2\hookrightarrow \I_{P\cup Q}$.
Thus \eqref{eqn:stab1} can be rewritten as
\begin{equation*}
 \xymatrix{\O_Y(-H)\ar@{^{(}->}[r]\ar@{^{(}->}[d]&F\ar@{^{(}->}[d]\ar@{>>}[r]&F_2\ar@{^{(}->}[d]\\
\O_Y(-H)^{\oplus 2}\ar@{>>}[d]\ar@{^{(}->}[r]&M_x\ar@{>>}[d]\ar@{>>}[r]&
K_x\ar@{>>}[d]\\
     \O_Y(-H)\ar@{^{(}->}[r]& G\ar@{>>}[r] & G_2.}
\end{equation*}
In that case,
$\ch_2(F)=\ch_2(\O_Y(-H))+\ch_2(F_2)\leq\tfrac{H^2}{2}-(P+Q)=-\tfrac{H^2}{2}$.
Since $\tfrac{\ch_2(M_x)\cdot H^2}{\rk (M_x)}=-\tfrac{P\cdot H^2}{4}=
-\tfrac{H^4}{12} > -\tfrac{H^4}{4}\geq \tfrac{\ch_2(F)\cdot H^2}{\rk(F)}$, $F$
does not destabilize $M_x$.

\smallskip

\noindent{\em Case B.3: $\rk(F_2)=2$.}
In this case, $F\cong F_2$. If $c_1(F)\geq 0$, then  $F\cong K_x$.
Thus \eqref{eqn:laM} splits, which gives a contradiction.

If $c_1(F)= -1$, then $F\hookrightarrow K_x$ and, by \eqref{eqn:laK1}, we have $F\hookrightarrow \I_{P\cup Q}^{\oplus 2}$.
Hence $F$ is the extension of two ideal sheaves.
Since we have assumed that $F$ is semistable, we have that
\begin{equation*}
	0\to \I_{Z_1}(-H)\to F\to \I_{Z_2}\to 0,
\end{equation*}
where $\codim Z_1$ and $\codim Z_2$ are greater or equal than $2$.
Moreover, $Z_1$ is possibly empty and $P\cup Q \subseteq Z_2$.
Thus, $\ch_2(F)=\ch_2(\I_{Z_1}(-H))+\ch_2(\I_{Z_2})=\frac{H^2}{2}-Z_1-Z_2\leq \frac{H^2}{2}-P\cup Q$. Hence, the same computation as at the end of Case B.2 shows that $F$ does not destabilize $M_x$.

\medskip

\noindent{\em Case C: $\rk(F)=3$.}
Now $c_1(F)\geq -1$ and we can consider again a diagram as \eqref{eqn:stab1}.
We can distinguish two possibilities.

\smallskip

\noindent{\em Case C.1: $\rk(F_2)=2$.}
Since $F$ is semistable, $-\tfrac{1}{3}\leq \mu(F)\leq\mu(F_2)\leq
\mu(K_x)=0$.
As $2\mu(F_2)$ is an integer, $\mu(F_2)=c_1(F_2)=0$, $F_1\cong \O_Y(-H)$
and $c_1(F)=-1$.
Hence we rewrite again \eqref{eqn:stab1} as
\begin{equation*}
\xymatrix{\O_Y(-H)\ar@{^{(}->}[r]\ar@{^{(}->}[d]&F\ar@{^{(}->}[d]\ar@{>>}[r]&F_2\ar@{^{(}->}[d]\\
\O_Y(-H)^{\oplus 2}\ar@{>>}[d]\ar@{^{(}->}[r]&M_x\ar@{>>}[d]\ar@{>>}[r]&
K_x\ar@{>>}[d]\\
     \O_Y(-H)\ar@{^{(}->}[r]& G\ar@{>>}[r] & T.}
\end{equation*}
Since $G$ is a torsion-free sheaf of rank $1$ and $c_1(G)=-1$, if
$\O_Y(-H)\not \cong G$, then $G\cong \I_Z(-H)$ with $Z\neq \emptyset$
with $\codim Z\geq 2$.
Hence we get a contradiction since $\Hom(\O_Y(-H),\I_Z(-H))=0$.
Thus, $G\cong \O_Y(-H)$ contradicting the fact that $M_x$ is non-split.

\smallskip

\noindent{\em Case C.2: $\rk(F_2)=1$.}
Since $O_Y(-H)^{\oplus 2}$ is semistable, $c_1(F_1)\leq -2$.
On the other hand, also $K_x$ is semistable, so $c_1(F_2)\leq 0$.
Then $c_1(F)\leq -2$ and $\mu(F)\leq-\tfrac{2}{3}$.
Therefore, $F$ does not destabilize $M_x$.

\medskip

This completes the proof of Proposition \ref{prop:family2}. We are now ready to prove our main result.

\medskip

\noindent\emph{Proof of Theorem \ref{thm:main4folds}.} Consider the irreducible component $\mathfrak{M}$ of the moduli space of Gieseker stable sheaves on $Y$ with Chern character $(4,-2H,-P,l,\frac{1}{4})$ and containing the sheaves $M_x$, for $x\in S_\mathrm{reg}$.

Now observe that $M_x\in\mathbf{T}_Y$, for all $x\in S_\mathrm{reg}$, and the Serre functor of $\mathbf{T}_Y$ is the shift by $2$. Hence, by Serre duality and stability,
\[
\Hom(M_x,M_x)\cong\Ext^2(M_x,M_x)\cong\CC\qquad\Ext^1(M_x,M_x)\cong\CC^2,
\]
for all $x\in S_\mathrm{reg}$. This means that $\mathfrak{M}$ is generically smooth of dimension $2$.

By the above discussion, the functor
\[
F:=\Xi^{-1}\circ f_*(-\otimes E_\alpha^\vee)\colon\Db(S,\mathcal{A}_0)\to\Db(Y)
\]
yields an injection $S_\mathrm{reg}\hookrightarrow\mathfrak{M}$ by sending the skyscraper sheaf $\CC(x)$ to $M_x$. Moreover, such a morphism induces an isomorphism between the tangent spaces
\[
T_xS=\Ext^2(\CC(x),\CC(x))\stackrel{\sim}{\longrightarrow}\Ext^1(M_x,M_x)=T_{M_x}\mathfrak{M}.
\]
Since being right orthogonal to the three objects $\mathcal{O}_Y$, $\mathcal{O}_Y(H)$ and $\mathcal{O}_Y(2H)$ is an open condition, $F$ induces and isomorphism between $S_\mathrm{reg}$ and an open subset of $\mathfrak{M}$. If $S$ is smooth, then this gives an isomorphism $S\cong\mathfrak{M}$.

\subsection{Universal family}
In this section we assume that $S$ is smooth.
Then the above discussion can be summarized by saying that there exists a twisted universal family $\M\in \coh(S\times Y,p_1^*\alpha)$ such that the Fourier--Mukai functor
		\begin{equation*}
		\Phi_{\M}:\Db(S,\alpha)\to\Db(Y)
		\end{equation*}
is fully faithful and it factors through $\mathbf{T}_Y$.
Recall that $\Phi_{\M}(-):=(p_Y)_*(\M\otimes p_S^*(-))$.

As the Kuznetsov's functor providing the full embedding of $\Db(S,\alpha)$ into $\Db(Y)$ is a composition of a Fourier--Mukai functor and mutations, finding $\M$ amounts to finding the Fourier--Mukai kernel of their composition.
For this, consider
\begin{equation*}\mathcal{S}:=(\sigma\times \id)_*\left( p_{\uY}^*\E'\otimes_{p^*_{\uY}\pi^*\B_0} (\pi\times \id)^* (f\times \id
)_*(p_1^*E_\alpha^\vee\otimes \O_{\Delta_S})\right)
\end{equation*}
where $p_{\uY}:\uY\times S\to \uY$ is the natural projection, $p_1:S\times S\to S$ is the projection on the first factor, $\Delta_S\subset S\times S$ is the diagonal, and $\E'$ is defined in \eqref{eqn:defE'}.
From \eqref{eqn:Kuz} we have
\begin{align*}
	 \res{\mathcal{S}}{Y\times \set{x}}&\cong \sigma_*\res{\left( p_{\uY}^*\E'\otimes_{p^*_{\uY}\pi^*\B_0} (\pi\times \id)^* (f\times \id
	)_*(p_1^*E_\alpha^\vee\otimes \O_{\Delta_S})\right)}{\uY\times \set{x}}\\
	&\cong \sigma_*\left( \E'\otimes_{\pi^*\B_0} \pi^* \res{\left( (f\times \id
	)_*(p_1^*E_\alpha^\vee\otimes \O_{\Delta_S})\right)}{\PP^2\times \set{x}}\right)\\
	&\cong \sigma_*\left( \E'\otimes_{\pi^*\B_0} \pi^* f_*\left(E_\alpha^\vee\otimes \res{\O_{\Delta_S}}{S\times \set{x}}\right)\right)\\
	&\cong \sigma_*\left( \E'\otimes_{\pi^*\B_0} \pi^* f_*\left(E_\alpha^\vee\otimes \CC(x)\right)\right)\\
	&\cong \sigma_*\Phi(L_x)\\
	&\cong \I_{l_x,Q_{f(x)}}.
\end{align*}

Then, the universal family $\M$ over $Y\times S$ such that $\res{\M}{Y\times \set{x}}\cong M_x$ can be described as
\begin{equation*}
	\M:= (\sigma\times \id)_*\circ\cat{L}_{p_{\uY}^*\O_{\uY}(h-H)} \circ\cat{R}_{p_{\uY}^*\O_{\uY}(-h)}\circ(\sigma\times \id)^*\mathcal{S}[-1],
\end{equation*}
where $\cat{L}_{p_{\uY}^*\O_{\uY}(h-H)}$ and $\cat{R}_{p_{\uY}^*\O_{\uY}(-h)}$ denote the corresponding left and right mutations.

The fact that $\M\in\Db(S\times Y,p_1^*\alpha)$ is actually a locally free sheaf follows from the fact that $\res{\M}{{x}\times Y}\cong M_x$ is locally free, for all $x\in S$.
This was observed above.


\medskip

{\small\noindent{\bf Acknowledgements.} Parts of this paper were written while the three authors were visiting the University of Bonn and the University of Barcelona whose warm hospitality is gratefully acknowledged.
We also thank the American Institute of Mathematics for sponsoring the workshop ``Brauer groups and obstruction problems: moduli spaces and arithmetic'' held February 25 to March 1, 2013, in Palo Alto, California, where parts of this paper were discussed.
It is a pleasure to thank Nick Addington, Asher Auel, Marcello Bernardara, Robin Hartshorne, Daniel Huybrechts, Nathan Ilten, Sukhendu Mehrotra, Scott Nollet, Nicolas Perrin, Antonio Rapagnetta,  and Pawel Sosna for very useful conversations and comments.
}


\end{document}